\def\R{\mathbb{R}}
\def\C{\mathbb C}
\def\Z{\mathbb Z}
\def\N{\mathbb N}
\def\P{\mathbb P}
\renewcommand{\epsilon}{\varepsilon}
\newtheorem{definition}{Definition}
\newtheorem{example}{Example}
\newtheorem{theorem}{Theorem} 
\newtheorem{lemma}{Lemma}
\newtheorem{proposition}{Proposition}
\newtheorem{corollary}{Corollary}
\begin{document}

\title[Two lectures on the center-focus problem]{Two lectures on the center-focus problem for the equation 
$\frac{dy}{dx} + \sum_{i=1}^n a_i(x) y^i = 0, 0\leq x \leq 1$\\
where $a_i$ are polynomials
}
 \author[L. Gavrilov]{Lubomir Gavrilov}
\address{
Institut de Math\'ematiques de Toulouse; UMR5219 \\
Universit\'e de Toulouse; CNRS \newline
UPS IMT, F-31062 Toulouse Cedex 9, France}
\email{lubomir.gavrilov@math.univ-toulouse.fr}
\thanks{Research partially supported by the Grant No DN 02-5 of the Bulgarian Fund “Scientific Research"}

\subjclass[2010]{Primary 37F75, 34C14, 34C05}

\keywords{center-focus problem, Abel equation, Lienard equation}

\begin{abstract}
This is an extended version of two lectures given during the  Zagreb Dynamical Systems Workshop, October 22-26, 2018. 
\end{abstract}

\maketitle
\tableofcontents



\section{The  center-focus problem}
The plane differential system
\begin{align}
\label{system}
\dot{x} &= P(x,y), \;
\dot{y}= Q(x,y)
\end{align}
is said to have a \emph{center} at the singular point $(0,0)$, if in a sufficiently small neighbourhood of this point all orbits are closed. 
Consider the   scalar  differential equation
\begin{align}
\label{ode}
\frac{dy}{dx} + f(x,y)   = 0, x\in [0,1]
\end{align}
in which $f(x,0) =0,  \forall x\in [0,1]$. The equation (\ref{ode}) is said to have a \emph{center} at $y=0$, if all solutions $y(x)$ starting near the origin, satisfy $y(0)=y(1)$ (the interval $[0,1]$ can be replaced by any closed interval). 

\emph{Note on the terminology.} We do not specify here the category to which belong $P, Q, f$. They will be either analytic or polynomial, depending on the context. The base field will be either $\R$ or $\C$ depending on the context too. Most results will be valid for both. Thus, the definition of a center for (\ref{ode}) is the same in the real  and in the complex case. In the case of an analytic  complex plane vector field (\ref{system}) the "complex" definition of a center is less straightforward. 
We say that the origin is  a non-degenerate center, if the vector field has an analytic first integral with a Morse critical point at the origin. If this is the case, we shall also say that (\ref{system}) 
has  a Morse singular point, e.g. \cite{cerlin96}.
We recall therefore
\begin{definition}
The analytic complex  vector field (\ref{system}) is said to have a Morse singular point, if it allows an analytic first integral in a neighbourhood of this point, which has a Morse type of singularity.
\end{definition}
If (\ref{system}) has a Morse singular point, then  the linear part of (\ref{system}) is diagonalisable with non-zero eigenvalues, that is to say the singular point of the vector field is non-degenerate. 

An example is the saddle $x'=x, y'=-y$ which has an analytic first integral $xy$ of Morse type, and hence a Morse critical point. Of course, it is linearly equivalent (over $\C$) to $x'=y,y'=-x$ with first integral $x^2+y^2$ which is the usual linear real center. 
The advantage to study Morse critical points over $\C$ is that we can use complex analysis and complex algebraic geometry. This is the point of view adopted in these notes.

The two equations (\ref{system}) and (\ref{ode}) are closely related. First, a polar change of variables transforms a plane system (\ref{system}) with a center to equivalent equation of the form (\ref{ode}) with a center along the interval $[0,2\pi]$. Second, if the family of functions $f(.,y)), x\in [0,1]$ is replaced by its Fourier series $\hat{f}(.,y)$ (so $\hat{f}(x+1,y) = \hat{f}(x,y)$ ) and the equation (\ref{ode}) has a center at $y=0$, then the new system
\begin{align}
\label{ode1}
\frac{dy}{dx} + \hat{f}(x,y)   = 0, (x,y)\in \R / \Z \times \R
\end{align}
will have all its orbits starting near the periodic solution $y=0$ on the cylinder $ \R / \Z \times \R$, periodic too. Of course, if $f$ is smooth, then the function $\hat{f}$ is only piece-wise smooth.
The transport map of (\ref{ode}) along $([0,1]$ becomes a return map for  (\ref{ode1}) and the definition of a limit cycle for (\ref{ode}) is straightforward too.
Actually, the scalar equation (\ref{ode}) in which $f$ is a regular function, should be considered as a simplified model of the eventually singular equation
$$
\frac{dy}{dx} = \frac{P(x,y)}{Q(x,y)} .
$$

We resume the above considerations in the following definitions, which make  sense both on $\R$ or  $\C$:
 \begin{definition}
 \label{def1}
 Let $ \varphi = \varphi(.; x_0,y_0)$ be the general solution of the equation  $dy + f(x,y)dx=0$   on the interval $ [x_0,x_1]$.
 \begin{description}
\item[ (i)] 
The solution $ \varphi = \varphi(.; x_0,y_0)$ is said to be periodic iff $\varphi(x_1; x_0,y_0) = x_0$
\item[(ii)]
The solution $ \varphi = \varphi(.; x_0,y_0)$ is said to be a limit cycle, provided that it is periodic and isolated, that is to say there is a neighbourhood of
its orbit on $S^1\times \R$ free of periodic solutions.
\item[(iii)]
the map $y \mapsto  \varphi(x_1; x_0,y) $ is the first return map of (\ref{ode}) in a neighbourhood of $(x_0,y=y_0)$. 
\item[(iv)]
The equation (\ref{ode}) defines a center in a neighbourhood of the periodic solution $\varphi$ provided that the first return map is the identity map in a neighbourhood of $y_0$.
If the return map is not the identity map, then we say that (\ref{ode}) defines a focus at the periodic solution $\varphi$.
 \end{description}
\end{definition}
 
 The center focus-problem for the equation (\ref{ode})  or  (\ref{system}) is, roughly speaking, to distinguish between a center and a focus. 
 The algebro-geometric content of the problem is as follows. Suppose, that (\ref{ode}) is polynomial, more precisely
 \begin{align}
\label{ode2}
\frac{dy}{dx} + \sum_{i=1}^n a_i(x) y^i , a_i \in \C[x], \deg a_i \leq n
\end{align}
so that $y=0$ is a periodic solution.
As we shall see in the next section, the first return map $y \mapsto \varphi(y)$
is analytic near $y=0$ and moreover
 $$
\varphi(y) = y + \sum_{n=1}^\infty c_n(a) y^{n+1} .
$$
where the coefficients $c_n=c_n(a)$ are polynomials  in the coefficients of $a_j = a_j(x)$, $j\leq n$. The condition $\varphi = id$ determines an infinite number of polynomial relations on the coefficients of $a_j$. By the Hilbert basis theorem, only a finite number of them are relevant, and they define an algebraic variety - the so called central variety $\mathcal C_n$ - in the vector space of all coefficients of the polynomials  $a_j$.  
The problem is therefore (as formulated by Lins Neto \cite{lins14} in the context of a polynomial foliation induced by  (\ref{system}))\\
\begin{center}{\it Describe the irreducible components of } $\mathcal C_n$.
\end{center}
\vspace{1cm}
 The content of the lectures is as follows.
 In section \ref{section2} we give a self-contained proof of an explicit formula, due to Brudnyi, for the solutions of the equation 
 $$
 \frac{dy}{dx} + \sum_{i=1}^n a_i(x) y^i = 0, 0\leq x \leq 1
 $$
in terms of iterated path integrals. 
 
 In section \ref{section3} we show how the perturbation theory of the Abel equation $ \frac{dy}{dx}  = a(x) y^2$ leads to the problem of vanishing of a suitable Abelian integral. The   conditions for vanishing of this Abelian integral give rise to a "moment problem" which has an elegant solution, due to Christopher. The solution is based on the well known L\"uroth  theorem. 
 
 Our main results concern the irreducible components of the center set of the scalar Abel equation and are formulated in section \ref{section4}.
 In section \ref{section41} we prove, that the set of scalar Abel equation with universal center (in the sense of Brudnuyi)  provide irreducible components of the Center set. 

 In sections \ref{section42},  \ref{section4b} we give a full description of the center set of two remarkable classical systems : quadratic vector fields and Li\'enard type equations. These results belong mainly to Dulac, Cherkas and Christopher, but we present them in the broader context of the present notes. In particular, the base field will be $\C$.
 All quadratic centers have a Darboux first integral, all Li\'enard centers come from "pull-back" (they are rationally reversible).
 
 It is fascinating that the centers of these two examples are of quite different nature, which motivates the so called Composition Conjecture for the Abel equation (\ref{ode}) to be discussed in section \ref{section43}.
 In this section we show that there are scalar Abel equations with a non-universal center (not related to a "pull back"). These equations have a Darboux type first integral, and among them we find the recent counter-example  to the Composition Conjecture mentioned above, found by Gin\'e, Grau and Santallusia \cite{ggs17}.

 \section{The first return map and the Brudnyi formula}
 \label{section2}
In this section we shall describe the return map of (\ref{ode2}) as a power series involving iterated path integrals. We give an explicit formula due to Brudnyi \cite{brud06}, which amounts to solve the differential equation. The classical approach to do this is by the Picard iteration method. If $y_0$ is the initial condition at $x_0$ of the differential equation 
$$dy = f(x,y) dx$$
 then the Picard iteration is
 $$
 y_{n+1}(x) = y_0 + \int_{x_0}^x y_n(t) dt 
 $$
 where $y_n$ tends to the solution of the equation as $n\to \infty$. 
 We illustrate this on the example $dy = y dx$. If $y_0$ is the initial condition at $x = 0$ then
  \begin{align*}
\label{exp}
y_1(x) & = y_0 + \int_0^x y_0 dt\\
y_2(x) & = y_0 + \int_0^x y_1(t) dt = y_0 +  \int_0^x y_0 dt + \iint_{0\leq t_2\leq t_1\leq x}  y_0 dt_1 dt_2
\end{align*}
 As 
 $$
\int \dots \int_{0 \leq t_n\leq \dots \leq t_1\leq x}y_0 dt_1   \dots d t_n = y_0 \frac{x^n}{n!} 
 $$
 we get $y(x) = y_0 e^x$ as expected. The multiple (or iterated) integrals above appear in a similar way  in the non-autonomous linear $dy=a(x) y dx$, or even non-linear case $dy=f(x,y) dx$.
 The non-linear case is more involved, it is reduced to the linear one, but after introducing  infinitely many new variables $y, y^2, y^3, \dots $. To get around this reduction we shall use a simple Ansatz, for which we 
 need a formal definition of iterated integral.

Let $Ass_\omega$ be the graded free associative algebra generated by the infinite dimensional vector space of differential one-forms $\omega = a(x,y) dx$, $a \in \C\{x,y\}$. Its elements are non-commutative polynomials in such one-forms.
The differential operator 
$$D :  Ass^1_\omega  \to Ass^1_\omega $$
$$
D( a(x,y)  dx) = \frac{\partial}{\partial y} a(x,y) dx
$$
induces a differential operator on  $ Ass_\omega$  which acts by  the Leibnitz rule. The readers familiar with the Picard-Lefschetz theory will regognize in $D$ an avatar of the covariant derivative of an Abelian integral along the level sets $\{y=const \}$. 

To save brackets, it is convenient to introduce the following notation
\begin{equation}
\label{Domega}
D\omega_1 \omega_2 \dots \omega_n = D(\omega_1 \omega_2 \dots \omega_n ) 
\end{equation}
so that (using brackets)
$$
D \omega_1 \omega_2 =D (\omega_1 \omega_2) =  (D \omega_1 ) \omega_2 +  \omega_1(D \omega_2) .
$$
and
$$
D \omega_1 D \omega_2 = D ( \omega_1 D \omega_2 )=  (D  \omega_1)( D \omega_2) + \omega_1 (D^2 \omega_2) .
$$
If we  use the notation
$$
D^k\omega = \omega ^{(k)} 
$$
then
$$
D \omega_1 \omega_2 = \omega_1' \omega_2+ \omega_1 \omega_2'
$$
and
$$
D \omega_1 D \omega_2 = ( \omega_1  \omega_2')' =  \omega_1'  \omega_2' +  \omega_1  \omega_2'' .
$$
For $\omega_1\omega_2 \dots \omega_n \in  Ass_\omega^n$, $\omega_k= \varphi_k(x,y) dx$,
define the iterated integral  $\int_{x_0}^x \omega_1\omega_2 \dots \omega_n$ of length $n$, as equal to
\begin{equation}
\label{multipleintegral}
 \int \dots \int_{x_0 \leq t_n\leq \dots \leq t_1\leq x} \varphi_1(t_1,y) \dots \varphi_n(t_n,y) dt_1   \dots d t_n .
\end{equation}
The iterated integral allows also a recursive definition (hence the name) :
\begin{equation}
\label{iteratedintegral}
\int_{x_0}^x \omega_n\omega_{n-1} \dots \omega_1 =  \int_{x_0}^x (\varphi_n (t) \int_{x_0}^t \omega_{n-1} \dots \omega_1)dt  
\end{equation}
where in the case $n=1$ we have the Riemann integral $\int_{x_0}^x \omega_{1}$.
We note, that the usual notation for the multiple integral (\ref{multipleintegral}) is $\int_{x_0}^x \omega_n\omega_{n-1} \dots \omega_1$ on the place of
 $\int_{x_0}^x \omega_1\omega_2 \dots \omega_n$, see  \cite[Chen]{chen77} or \cite[Hain]{hain87}.  The reason to prefer the definition (\ref{iteratedintegral}) is that it is 
 better adapted to applications in differential equation, e.g. \cite{gavr05}. Recall in this context, that 
 $$
 \int_{x_0}^x \omega_n\omega_{n-1} \dots \omega_1 = (-1)^n \int^{x_0}_x \omega_1\omega_2 \dots \omega_n .
 $$
For a short summary of properties of iterated integrals  which we use see  \cite[Appendix]{gavr05}, \cite[section 2]{gmn09}.

\begin{theorem}
\label{firstintegral} With the notation (\ref{Domega}),
a first integral of the differential equation $dy + f(x,y) dx = 0$  is given by the following recursively defined convergent  series
\begin{align}
\label{first2}
\boxed{
\varphi(x_0;x,y) = y + \int_{x_0}^x \omega +  \int_{x_0}^x \omega D \omega + \int_{x_0}^x \omega D \omega D \omega +
\dots  }
\end{align}
where
$$
\omega =  f(x,y) dx .
$$
The general solution of (\ref{ode}) with initial condition $(x_0,y_0)$ is given by $$y= \varphi(x;x_0,y_0) .$$
\end{theorem}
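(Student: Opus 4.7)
The plan is to verify that the formal series defining $\varphi(x_0; x, y)$, viewed as a function of the upper integration limit $x$ and of the parameter $y$, satisfies the characteristic PDE
$$
\partial_x \varphi - f(x,y)\,\partial_y \varphi = 0,
$$
equivalent to $\varphi$ being constant along solutions of $dy + f\,dx = 0$, together with the normalization $\varphi(x_0; x_0, y) = y$. These two properties characterize $\varphi$ uniquely, and the general solution is then recovered implicitly from $\varphi(x_0; x, y) = y_0$.

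Set $T_n := \omega D\omega D\omega \cdots D\omega$ with $n$ occurrences of $\omega$. Parsed according to the convention (\ref{Domega}), under which every $D$ binds everything to its right, one gets $T_1 = \omega$ and the key recursion $T_n = \omega \cdot D(T_{n-1})$ for $n\geq 2$. The argument then rests on two elementary identities. First, by the recursive formula (\ref{iteratedintegral}) and the fact that $\omega$ is the leftmost factor of $T_n$ (hence carries the outermost integration variable),
$$
\partial_x \int_{x_0}^x T_n = f(x,y) \int_{x_0}^x D(T_{n-1}) \quad (n\geq 2), \qquad \partial_x \int_{x_0}^x T_1 = f(x,y).
$$
Second, since $y$ enters only through the integrand, differentiation under the integral sign combined with the Leibnitz action of $D$ on the tensor algebra gives
$$
\partial_y \int_{x_0}^x T_{n-1} = \int_{x_0}^x D(T_{n-1}).
$$
Combining the two identities yields $\partial_x \int T_n = f\cdot \partial_y \int T_{n-1}$ for $n\geq 2$, while $\partial_x \int T_1 = f = f\cdot \partial_y y$. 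Summing termwise over $n\geq 1$ and writing $\varphi = y + \sum_n \int T_n$ immediately gives $\partial_x \varphi = f\,\partial_y \varphi$. The initial condition $\varphi(x_0; x_0, y) = y$ is evident, since every iterated integral vanishes when its endpoints coincide.

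The remaining point is convergence of the series and justification of termwise differentiation. The Leibnitz expansion shows that $T_n$ is a finite sum of products of $n$ one-forms, each of the shape $\partial_y^{k_1}f \cdots \partial_y^{k_n}f\,dx\cdots dx$ with $\sum_j k_j = n-1$, and integration over the $n$-simplex supplies a $1/n!$ factor. Standard Picard-type estimates on a sufficiently small polydisc around $(x_0, y_0)$ then yield absolute convergence and legitimize the formal manipulations above. I expect this combinatorial-analytic bookkeeping to be the only real technicality; conceptually the theorem merely records that the Brudnyi series is the unique solution of the linear transport PDE with identity initial data.
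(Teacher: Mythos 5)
Your proof is correct and follows essentially the same route as the paper: the paper likewise verifies that $d\varphi = (\omega + dy)\,\frac{\partial}{\partial y}\varphi$ by termwise differentiation of the series, using exactly your two identities ($\partial_x$ of each iterated integral peels off the leftmost factor $\omega$, while $\partial_y$ inserts a $D$ by the Leibnitz rule), and it likewise dismisses convergence as a standard a priori estimate. The only cosmetic difference is that you package the computation as a transport PDE with the recursion $T_n=\omega\cdot D(T_{n-1})$ made explicit, whereas the paper writes the same cancellation directly in the displayed formula for $d\varphi$ and then invokes the symmetry $\varphi(x_0;x,y)=y_0 \Leftrightarrow y=\varphi(x;x_0,y_0)$ to state the solution explicitly.
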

\begin{example}
In the linear case 
$$ y' + \alpha y = 0  \iff dy + \alpha y dx = 0
$$ we obtain
\begin{align*}
\varphi(x_0;x,y) = & y ( 1+  \alpha  \int_{x_0}^x dx + \alpha^2  \int_{x_0}^x dx.dx + \dots)\\
= & y( 1 + \alpha(x-x_0) + \alpha^2  \frac{(x-x_0)^2}{2} + \dots
 = & y e^{\alpha (x-x_0) }
\end{align*}
and the general solution is
\begin{align*}
y = \varphi(x;x_0,y_0) = & y_0 e^{\alpha (x_0-x) } .
\end{align*}
In the quadratic case  $$dy + 2 x y^2 dx = 0, \omega = 2 x y^2 dx $$
we compute recurisvely
\begin{align*}
 \int_{x_0}^x \omega &= \int_{x_0}^x  2 x y^2 dx = x^2-x_0^2 \\
 \int_{x_0}^x \omega D\omega & =  \int_{x_0}^x 2 x y^2 dx . 4  x y dx  = y^3 (x^2-x_0^2)^2 \\
  \int_{x_0}^x \omega D\omega \dots D\omega &= (x^2-x_0^2)^n
\end{align*}
Therefore we 
get the first integral
\begin{align*}
\varphi(x_0;x,y) 
 = & y + y^2 (x^2-x_0^2) + y^3  (x^2-x_0^2)^2 + \dots 
 \end{align*}
and the corresponding general solution is
\begin{align*}
y= \varphi(x ; x_0,y_0) & = y_0 + y_0^2 (x_0^2-x^2) + y^3  (x^2_0-x^2)^2 + \dots \\
& =  \frac{y_0}{1-y_0(x_0^2-x^2)} .
\end{align*}
\end{example}
\begin{proof}[Proof of Theorem \ref{firstintegral}.]
 We first verify, that for every fixed $x_0$, the function $\varphi(x_0;x,y)$ is a first integral :
\begin{align*}
d\varphi(x_0;x,y)&=\frac{ \partial}{\partial x} \varphi(x_0;x,y) dx + \frac{ \partial}{\partial y} \varphi(x_0;x,y) dy \\
&= \omega + \omega \int_{x_0}^x D \omega + \omega \int_{x_0}^x D \omega D \omega + \omega \int_{x_0}^x D \omega D \omega D \omega + \dots \\
 &+  (1 +    \int_{x_0}^x D \omega +  \int_{x_0}^x D \omega D \omega +  \int_{x_0}^x D \omega D \omega D \omega + \dots) dy \\
 &= ( \omega + dy)  \frac{ \partial}{\partial y} \varphi(x_0;x,y) dy = 0 .
\end{align*}
As $\varphi(x_0;x_0,y_0) = y_0$ then the level set $\{ (x,y) : \varphi(x_0;x,y) = y_0 \}$ contains both $(x_0,y_0)$ and $(x,y)$. By symmetry
$$
y= \varphi(x;x_0,y_0) 
$$
is the solution
 of (\ref{ode}) with initial condition $y(x_0)=y_0$. The convergency proof is by standard a priori estimates (omitted)
\end{proof}
Note that for fixed $x_0,x_1$ the two return  maps
$$
y \mapsto \varphi(x_1;x_0,y) , \;\; y \mapsto \varphi(x_0;x_1,y) 
$$
are mutually inverse. Therefore
$\varphi(x_1;x_0,.) = id$ if and only if $ \varphi(x_0;x_1,.) = id$. 
Using Theorem \ref{firstintegral} we can give explicit
center conditions.  Assume that
$$
f(x,y) = \sum_{i=1}^\infty a_i(x) y^{i+1} .
$$
and develop the return map $ \varphi(x_0;x_1,y) $  as a power series in $y$
\begin{equation}
\label{returnmap}
\varphi(x_0;x_1,y) = y + \sum_{n=1}^\infty c_n(a) y^{n+1} .
\end{equation}
If we denote, by abuse of notations, $a_{i}= a_{i}(x) dx $
then we get for the first few coefficients $c_n(a)$
\begin{align*}
c_1(a) & = \int_{x_0}^{x_1}  a_1 \\
c_2(a) & = \int_{x_0}^{x_1}  a_2 + 2 a_1a_1\\
c_3(a) & = \int_{x_0}^{x_1} a_3 + 2 a_2 a_1 + 3 a_1a_2 + 6 a_1^3 \\
c_4(a) & = \int_{x_0}^{x_1} 
a_4 + 2 a_3 a_1 + 3 a_2^2 + 4 a_1a_3 + 6a_2a_1^2 + 8 a_1a_2a_1 + 12 a_1^2a_2 + 24 a_1^4
\end{align*}
and so on. The general form of the coefficients $c_n(a)$  is found
immediately from Theorem \ref{firstintegral}. We resume this in the following

\begin{theorem}[Brudnyi's formula]
\label{brf}
The coefficients $c_n(a)$  of the first return map (\ref{returnmap}) for the differential equation
$$
\frac{dy}{dx} + \sum_{i=1}^\infty a_i(x) y^{i+1}   = 0, x\in [x_0,x_1]
$$
are given by the formulae
$$
c_n(a)  = \sum_{i_1+\dots + i_k=n} c_{i_1,\dots,i_k}  \int_{x_0}^{x_1} a_{i_1} \cdots a_{i_k} 
$$
where
\begin{align*}
c_{i_1} & = 1\\
c_{i_1,i_2} & = i_2+1 \\
c_{i_1,i_2,i_3} & = (i_3+1)(i_2+1)\\
\vdots & = \vdots \\
c_{i_1,\dots,i_k}  & = (i_k+1)(i_k+i_{k-1} +1) \dots (i_k + \cdots i_2 +1) 
\end{align*}
\end{theorem}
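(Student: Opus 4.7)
The plan is to bypass a direct expansion of the non-commutative expression $\omega D\omega \cdots D\omega$ and instead work with the Taylor coefficients of $\varphi$ in $y$. Since $\varphi(x_0;x,y)$ is a first integral of the flow $dy+f(x,y)\,dx=0$ (Theorem \ref{firstintegral}), it satisfies the PDE $\partial_x \varphi = f(x,y)\,\partial_y \varphi$ with $\varphi(x_0;x_0,y)=y$. I would substitute the ansatz $\varphi(x_0;x,y)=y+\sum_{n\geq 1} c_n(x_0;x)\, y^{n+1}$ together with $f(x,y)=\sum_{i\geq 1} a_i(x)\,y^{i+1}$ and match the coefficient of $y^{n+1}$ on both sides. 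This yields the recursion
\begin{equation*}
\partial_x c_n(x_0;x) \;=\; a_n(x) \;+\; \sum_{\substack{p+q=n\\ p,q\geq 1}}(q+1)\,a_p(x)\,c_q(x_0;x), \qquad c_n(x_0;x_0)=0,
\end{equation*}
which upon integration becomes $c_n(x_0;x_1)=\int_{x_0}^{x_1} a_n + \sum_{p+q=n}(q+1)\int_{x_0}^{x_1} a_p(t)\,c_q(x_0;t)\,dt$.

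Next I would argue by induction on $n$. The base case $n=1$ gives $c_1=\int a_1$, in agreement with the theorem. For the inductive step, assuming the formula for all $q<n$, I substitute the inductive expression $c_q=\sum_{(j_1,\ldots,j_l)} c_{j_1,\ldots,j_l}\int a_{j_1}\cdots a_{j_l}$ into the integrated recursion and use the defining property of iterated integrals \eqref{iteratedintegral},
\begin{equation*}
\int_{x_0}^{x_1} a_p(t)\!\left[\int_{x_0}^t a_{j_1}\cdots a_{j_l}\right]\! dt \;=\; \int_{x_0}^{x_1} a_p\,a_{j_1}\cdots a_{j_l},
\end{equation*}
to read off that in $c_n$ the coefficient of $\int a_{i_1}a_{i_2}\cdots a_{i_k}$ (for $k\geq 2$) equals $(1+i_2+\cdots+i_k)\,c_{i_2,\ldots,i_k}$. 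The relabelling $(p,j_1,\ldots,j_l)\mapsto(i_1,i_2,\ldots,i_k)$ is transparent because $a_p$ is prepended as the outermost form of the iterated integral. This establishes the one-step recursion $c_{i_1,i_2,\ldots,i_k}=(1+i_2+\cdots+i_k)\,c_{i_2,\ldots,i_k}$.

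Iterating this recursion down to the $k=1$ base case $c_{i_1}=1$ (which is precisely the isolated ``$\int a_n$'' contribution of the integrated recursion) produces the product
\begin{equation*}
c_{i_1,\ldots,i_k} \;=\; \prod_{j=2}^{k}\bigl(1+i_j+i_{j+1}+\cdots+i_k\bigr),
\end{equation*}
which is the claimed formula. The only delicate point throughout is to keep the ``leftmost-is-outermost'' convention of \eqref{iteratedintegral} straight, so that the newly produced $a_p$ is prepended (not appended) to the index string at each inductive step and the combinatorial factor $q+1=i_2+\cdots+i_k+1$ is attached to the right subtuple. Beyond this bookkeeping there is no genuine analytic or combinatorial obstacle.
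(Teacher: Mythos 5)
Your proof is correct, and it takes a genuinely different route from the paper. The paper's proof consists of the single remark that the general form of $c_n(a)$ ``is found immediately from Theorem \ref{firstintegral}'': one substitutes $\omega=\sum_i a_i(x)y^{i+1}dx$ into the series $y+\int\omega+\int\omega D\omega+\cdots$ and expands the nested words $\omega D(\omega D(\cdots))$ by the Leibniz rule, so that the factor $(i_k+1)(i_k+i_{k-1}+1)\cdots$ arises as a sum of products of falling factorials over all admissible distributions of the operators $D$ among the letters --- a combinatorial identity that the paper asserts rather than verifies. You bypass this entirely: deriving the recursion $\partial_x c_n=a_n+\sum_{p+q=n}(q+1)a_p c_q$ from the transport PDE $\partial_x\varphi=f\,\partial_y\varphi$ and prepending $a_p$ via \eqref{iteratedintegral} turns the whole computation into the transparent one-step recursion $c_{i_1,\dots,i_k}=(1+i_2+\cdots+i_k)\,c_{i_2,\dots,i_k}$, which is arguably more self-contained and uses only the analyticity of the flow, not the full iterated-integral expansion. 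Your closed form $c_{i_1,\dots,i_k}=\prod_{j=2}^{k}(1+i_j+\cdots+i_k)$ agrees with the paper's general line and with the displayed coefficients of $c_3(a)$ and $c_4(a)$ (e.g.\ the coefficient $6$ of $\int a_1^3$); note that it also shows the paper's third displayed line is a misprint --- it should read $c_{i_1,i_2,i_3}=(i_3+1)(i_3+i_2+1)$, not $(i_3+1)(i_2+1)$. The one point you should state explicitly rather than leave as ``transparent bookkeeping'' is that matching coefficients of the iterated integrals $\int a_{i_1}\cdots a_{i_k}$ is legitimate because the identity is formal in the symbols $a_i$ (distinct index tuples give linearly independent functionals for generic coefficients); with that said, the argument is complete.
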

The above formula   was deduced first by Brudnyi  \cite[p.422]{brud06} under equivalent form, see also \cite[Proposition 2.4]{bry10} in the case (\ref{abel}).

\begin{corollary}
The equation (\ref{ode}) has a center on the interval $[x_0,x_1]$ if and only if $c_n(a)=0$, for all integer $ n \geq 1$.
\end{corollary}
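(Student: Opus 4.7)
The plan is straightforward: combine the defining property of a center with the explicit power-series expansion of the return map provided by Brudnyi's formula.

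First, I would recall from Definition \ref{def1}(iv) that equation (\ref{ode}) has a center on $[x_0,x_1]$ precisely when the first return map $y \mapsto \varphi(x_0;x_1,y)$ agrees with the identity map on some neighborhood of $y=0$. (One could equivalently work with $\varphi(x_1;x_0,\cdot)$, since the two return maps are mutually inverse as noted after Theorem \ref{firstintegral}.) Next, Theorem \ref{firstintegral} guarantees that the first integral is produced by a convergent iterated-integral series in $y$, and in particular it makes $y=0$ a periodic solution; consequently the return map is analytic at the origin and admits the expansion (\ref{returnmap}),
\[
\varphi(x_0;x_1,y) \;=\; y + \sum_{n=1}^{\infty} c_n(a)\, y^{n+1},
\]
with the coefficients $c_n(a)$ given by the closed form in Theorem \ref{brf}.

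Finally, by uniqueness of Taylor coefficients for an analytic function (equivalently, by the identity principle), an analytic germ at $y=0$ coincides with the identity map on a neighborhood of $0$ if and only if every coefficient of its Taylor expansion past the linear term vanishes. Applied to the expansion above, this translates the center condition $\varphi(x_0;x_1,\cdot) = \mathrm{id}$ into the infinite system $c_n(a)=0$ for all $n \geq 1$, which is exactly the stated equivalence.

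The only point requiring care — and the closest thing to an obstacle — is making sure that the convergence asserted in Theorem \ref{firstintegral} really produces an analytic germ of the return map at $y=0$, so that the identity principle is legitimately applied; however, this is built into Theorem \ref{firstintegral} (the series converges on a neighborhood of the initial condition), so no additional analytic argument is needed.
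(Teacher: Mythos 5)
Your argument is correct and is exactly the (implicit) reasoning behind the corollary in the paper, which states it without proof as an immediate consequence of the analyticity of the return map, the expansion (\ref{returnmap}), and the uniqueness of Taylor coefficients. No issues.
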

\begin{example}
Suppose that the equation $$\frac{dy}{dx} + a_1(x)y^2 + a_2(x)y^3 + \dots = 0 $$ has a center on the interval $[x_0,x_1]$. Then, using as above  the  notation $a_{i}= a_{i}(x) dx $ we have
\begin{align*}
c_1&= \int_{x_0}^{x_1} a_1  = 0 \\
c_2& = \int_{x_0}^{x_1} a_2  + 2  \int_{x_0}^{x_1} a_1^2  = 0 .
\end{align*}
The identity $$2  \int_{x_0}^{x_1} a_1^2 =  (\int_{x_0}^{x_1} a_1)^2$$ implies then, that $ \int_{x_0}^{x_1} a_2  = 0$.
If we consider more specifically the Abel equation 
 \begin{equation}
 \label{abelex}
 \frac{dy}{dx} + a_1(x)y^2 + a_2(x)y^3  = 0 
 \end{equation}
 then taking into consideration that $\int_{x_0}^{x_1} a_1^3 = 0$ and 
 $$\int_{x_0}^{x_1}   a_2 a_1 + 3a_1a_2 = \int_{x_0}^{x_1}    a_1 a_2 = 0$$
  we obtain $c_3=  \int_{x_0}^{x_1} a_1 a_2 $. Therefore a necessary condition for the Abel equation (\ref{abelex}) to have a center on $[x_0,x_1]$ is
  \begin{equation}
  \label{nc}
  \int_{x_0}^{x_1} a_1  = 0,  \int_{x_0}^{x_1} a_2  = 0, \int_{x_0}^{x_1} a_1 a_2  = 0
  \end{equation}
 If we suppose that $a_1$, $a_2$ are polynomials of degree at most two, these conditions are also sufficient \cite{bfy99}. The case $\deg a_1, a_2 = 3$ can be studied similarly, see \cite{bfy00}.

\end{example}

In general, an obvious sufficient condition to have a center is therefore 
\begin{align}
\label{ucenter}
 \int_{x_0}^{x_1} a_{i_1} \cdots a_{i_k} = 0, \forall i_j, k \geq 1 .
\end{align}
Centers with the    property (\ref{ucenter}) were called {\it universal} in \cite{brud06}. 

Consider, more specifically, the following equation with polynomial coefficients $a_i$
\begin{align}
\label{an}
dy + \sum_{i=1}^n y^{i+1}  a_i(x) dx = 0,  a_i(x)  \in K[x] .
\end{align}

\begin{theorem}[Brudny, \cite{brud06}, Corollary 1.20]
The polynomial equation (\ref{an}) has an universal center  on the interval $[x_0,x_1]$, if and only if, it is a pull back of some polynomial equation 
\begin{align}
\label{bn}
dy = (\sum_{i=1}^n b_i(\xi) y^{i+1} ) d\xi,  b_i(\xi) \in K[\xi] .
\end{align}
via a suitable polynomial map $\xi = \xi (x)$ having the property $\xi(x_0)=\xi(x_1)$.
\end{theorem}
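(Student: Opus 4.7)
The plan is to prove the two directions separately: the sufficiency reduces to the vanishing of iterated integrals along a loop in $\C$, while the necessity is the hard direction and relies on a polynomial Lüroth-type composition theorem, in the spirit of Christopher's moment theorem to be discussed in Section \ref{section3}.

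For the sufficiency, suppose $a_i(x) = b_i(\xi(x))\,\xi'(x)$ with $\xi\in\C[x]$ and $\xi(x_0)=\xi(x_1)$. Then each $a_i(x)\,dx$ is the pull-back by $\xi$ of the polynomial one-form $b_i(\xi)\,d\xi$ on $\C$, and the functoriality of iterated integrals under pull-back gives
$$\int_{x_0}^{x_1} a_{i_1}\cdots a_{i_k} \;=\; \int_{\xi\circ[x_0,x_1]} b_{i_1}(\xi)\,d\xi\cdots b_{i_k}(\xi)\,d\xi,$$
where $\xi\circ[x_0,x_1]$ is a loop in $\C$. I would then argue inductively on $k$ that the iterated antiderivative $G_k(\zeta):=\int_{\xi(x_0)}^{\zeta} b_{i_1}\cdots b_{i_k}$ is a single-valued polynomial in $\zeta$ on the simply connected space $\C$ (each $G_j$ is a polynomial antiderivative of $b_{i_j}(\zeta)\,G_{j-1}(\zeta)$, starting from $G_0\equiv 1$). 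Hence $G_k(\xi(x_1))=G_k(\xi(x_0))=0$, which is exactly the universal center condition (\ref{ucenter}).

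For the necessity, I would introduce the polynomial antiderivatives $A_i(x):=\int_{x_0}^x a_i(t)\,dt\in \C[x]$, so that $A_i(x_0)=0$. The length-one universal conditions give $A_i(x_1)=0$ for all $i$, while higher-length conditions, after iterated integration by parts, translate into moment relations of the form $\int_{x_0}^{x_1} P(A_1,\ldots,A_n)\,dA_j=0$ for arbitrary polynomials $P$ and indices $j$. The key step is then to invoke the polynomial version of Lüroth's theorem, as used in Christopher's solution of the moment problem: these moment conditions force the subring $\C[A_1,\ldots,A_n]\subseteq\C[x]$ to coincide with $\C[\xi]$ for some polynomial $\xi\in\C[x]$. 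Writing $A_i=B_i(\xi)$ and differentiating yields $a_i\,dx = B_i'(\xi)\,\xi'\,dx$, exhibiting (\ref{an}) as the pull-back of (\ref{bn}) with $b_i=B_i'$. Finally, since $\xi=Q(A_1,\ldots,A_n)$ for some polynomial $Q$ and $A_i(x_0)=A_i(x_1)=0$, we conclude
$$\xi(x_0)=Q(0,\ldots,0)=\xi(x_1),$$
which completes the pull-back structure.

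The main obstacle is the polynomial Lüroth step in the necessity direction: extracting from the infinite family of moment relations the conclusion that all primitives $A_i$ share a single \emph{common} polynomial factor $\xi$, rather than each $A_i$ factoring through its own polynomial. Once such a common $\xi$ is available the remaining verifications are immediate. This is where the iterated integral formalism combined with Lüroth's theorem does the deep work, essentially reducing the problem to the moment problem treated in Section \ref{section3}.
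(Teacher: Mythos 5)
Your sufficiency argument is correct and is essentially the one the paper gives: each $a_i\,dx$ is the pull-back of $b_i(\xi)\,d\xi$, iterated integrals are functorial under pull-back, and an iterated integral of polynomial one-forms over a loop in the simply connected space $\C$ (equivalently, over a path with coinciding endpoints) vanishes because its iterated antiderivative is a single-valued polynomial. Note, however, that the paper does not prove the converse at all: it states the theorem as Brudnyi's Corollary 1.20 and only sketches the easy direction, so the necessity is exactly where your proposal must stand on its own.

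There is a genuine gap in your necessity argument, at two places. First, the reduction of the universal center conditions to moment relations $\int_{x_0}^{x_1}P(A_1,\dots,A_n)\,dA_j=0$ is asserted but not established. At length two one indeed has $\int_{x_0}^{x_1}a_ia_j=\int_{x_0}^{x_1}A_j\,dA_i$, and by the shuffle relations every such moment is a linear combination of iterated integrals, so universal centers do satisfy all your moment conditions; but from length three on the iterated integrals are \emph{not} polynomials in the primitives $A_i$, and the moments span a strictly smaller space. So your reduction throws away information --- which matters because of the second, fatal, point: the full set of moment conditions does \emph{not} imply a common composition factor. This is precisely the content of the polynomial moment problem: the vanishing of $\int_0^1 q\,A^k\,dx$ for all $k$ forces only that $Q$ is a \emph{sum} of polynomials each composing with $A$ (Pakovich--Muzychuk), and there are explicit non-composite examples; this is why Christopher's Theorem~\ref{ccth} in the paper carries the hypotheses $a(0)\neq 0$, $a(1)\neq 0$, and why the Composition Conjecture of Section~\ref{section43} is false. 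Hence the ``polynomial L\"uroth step'' as you set it up would prove a false statement, and Brudnyi's actual proof must (and does) exploit the higher-length iterated integral conditions that are invisible at the level of moments. A smaller issue in the same step: L\"uroth's theorem concerns subfields of $\C(x)$, not the subring $\C[A_1,\dots,A_n]$ of $\C[x]$; passing from a rational generator $W$ to a polynomial $\xi$ with each $A_i$ a polynomial in $\xi$ requires an additional (standard) argument, which you should at least cite. The final observation that $\xi(x_0)=\xi(x_1)$ also needs more care than $\xi=Q(A_1,\dots,A_n)$: in general $\xi$ generates the field containing the $A_i$ and need not lie in the ring they generate.
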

Not all centers of (\ref{an}) are universal, as discovered recently in \cite{ggs17}.

For a further use, note that an obvious consequence from $F(x,y)\equiv y$ is that
$$
\int_{x_0}^{x_1} a_1(x)dx = \int_{x_0}^{x_1} a_2(x)dx = 0.
$$
This will be used when studying the center problem for the so called degenerate Abel equation of first kind
$$
\frac{dy }{dx} = a_1(x)y^2 + a_2(x) y^3 .
$$

\section{Bifurcation functions and a Theorem of Christopher}
\label{section3}
In this section we study the following perturbed Abel differential equation on the interval $[0,1]$
$$
y' = a(x) y^2  - \sum_{j= 1}^\infty \varepsilon^j(y^2 p_j(x)+ y^3 q_j(x) )
$$
or equivalently
\begin{align}
\label{perturbed1}
\frac{dy}{y^2} = a(x)  dx - \varepsilon \omega_1 - \varepsilon^2\omega_2 - \dots 
\end{align}
where 
$$
\omega_j = (p_j(x)+ y q_j(x)) dx
$$
and $a=a(x),p_j=p_j(x), q_j=q_j(x)$
are polynomials of degree $$\deg a = n, \deg p_j \leq n, \deg q_j \leq n $$
and $\varepsilon$ is a small parameter.
For $\varepsilon = 0$  (\ref{perturbed1}) has a first integral
$$
H(x,y)= \frac1y + A(x), \; A(x) = \int a(x) dx .
$$

\emph{How many limit cycles has the perturbed system (\ref{perturbed1})  on the interval $[0,1]$? }

Recall from the preceding section that a solution $y(x)$ such that $y(0)=y(1)$) is called periodic on $[0,1]$.
A limit cycle of  (\ref{perturbed1}) on $[0,1]$ is therefore an isolated  periodic solution on $[0,1]$.

The number of the limit cycles in a compact set are bounded by the number of the  zeros of the so called \emph{bifurcation function}, which 
we define bellow. A limit cycle which remains bounded when $\varepsilon \to 0$, tends to a periodic solution of the non perturbed system.
If the non-perturbed system ($\varepsilon = 0$) has a periodic solution, then necessarily $A(0)=A(1)$, which already implies that it
has a center.  
For this reason we assume from now on that 
$A(0)=A(1)=0$, so that 
\begin{equation*}
dy = a(x) y^2 dx \Leftrightarrow dH = 0
\end{equation*}
 has a center along $0\leq x \leq 1$.
The perturbed equation
can be written
\begin{align}
\label{perturbed}
d H - \varepsilon \omega_1 - \varepsilon^2\omega_2 - \dots = 0 .
\end{align}
For a solution $y(x)$, let $\mathcal P_\varepsilon$ be the first return map   which sends the initial condition $y_0=y(0)$ to $y_1=y(1)$. We parameterise $\mathcal P_\varepsilon$ by $h=\frac1y = H(0,y)=H(1,y)$ and note that $\mathcal P_\varepsilon$ is analytic both in $h$ and $\varepsilon$ (close to zero). We have therefore for the first return map
\begin{align}
\label{return1}
\mathcal P_\epsilon (h)- h= \varepsilon^k M_k(h) + O(\varepsilon^{k+1}) , \; M_k \neq 0
\end{align}
The  function $M_k$ is the\emph{ bifurcation function}, associated to the equation (\ref{perturbed1}). It is also known as "first non-zero Melnikov function".
The reader may compare this  to (\ref{first2}) which is another representation of the first return map, defined for small $y$. As we shall see, the bifurcation function is globally defined. Therefore 
for every compact set $K$, $[0,1] \subset K \subset \R^2$ and all sufficiently small $|\varepsilon|$, the number of the limit cycles of (\ref{perturbed1}) in $K$ is bounded by the number of the zeros of the bifurcation function $M_k$ (counted with multiplicity).

$M_k$ allows  an integral representation  
$$
M_k(h) = \int_{\{H=h\}} \Omega_k
$$
where the integration is along the level set 
$$\{H=h\} = \{(x,y) : 1/y + A(x)= h, 0\leq x \leq 1 \} .$$
 The differential form $\Omega_k$ is
computed by the classical Fran\c{c}oise's recursion formula  \cite{fran96,ilie98a,rous98} as follows:
\begin{quotation} \em
If $k=1$ then $\Omega_1 = \omega_1$, otherwise
\begin{align}
\Omega_m = \omega_m + \sum_{i+j=m} r_i \omega_j, \; 2 \leq m \leq k
\end{align}
and the functions $r_i$, $1\leq i \leq k-1$ are determined successively from the identities $\Omega_i = d R_i + r_i dH$.

\end{quotation}

The first order Melnikov function $M_1$   is computed   in \cite{bfy00}, and probably earlier,  e.g. Lins Neto \cite[section 3]{lins80}.
We have
\begin{align*}
M_1(h) &=   \int_{\{H=h\}} \omega_1 \\ &= \int_{\{H=h\}} p_1(x) dx + yq_1(x) dx\\
& = \int_0^1 p_1(x) dx + \int_0^1 \frac{q_1(x)}{h - A(x)}  dx  \\
& = \int_0^1 p_1(x) dx + \sum_{k=0}^\infty h^{-k-1} \int_0^1 q_1(x)  A^k(x)dx .
\end{align*}
$M_1$ vanishes identically if and only if
$$
\int_0^1 p_1(x) dx = 0, \; \int_0^1 q_1(x) A^k(x) dx = 0, \; k= 0,1,2, \dots
$$
which is the content of the polynomial moment problem for $q_1$ and $A$.  If $M_1=0$ by the above formula we get
\begin{align*}
M_2(h)= \int_{\{H=h\}} r_1 \omega_1 + \int_{\{H=h\}} \omega_2 
\end{align*}
where $r_1$ is computed from the identity $\omega_1= d R_1 + r_1 dH$. As $d \omega_1 = d r_1 \wedge dH$ then $d r_1 = \omega_1'= \frac{d\omega_1}{d H}$ is the Gelfand-Leray form of $\omega_1$.
From the identity $H(x,y(x,h)) \equiv h$ we have $ \frac{\partial  y}{\partial h} = - y^2$ and hence
$$
r(x,y)= \int_0^x \omega_1' =  -  \int_0^x y^2 q(x)  .
$$
We conclude
\begin{proposition}[formula (2.8) in \cite{gavr05}]
Under the hypothesis $M_1=0$ the second Melnikov function reads
\begin{align}
M_2(h) &= \int_{\{H=h\}} \omega_1 \omega_1' +  \int_{\{H=h\}} \omega_2 
\end{align}
where
$$
\omega_1=  p_1(x) dx + yq_1(x),\; \omega_1'= -y^2 q_1(x) dx,\; \omega_2= p_2(x) dx + yq_1(x) .
$$
\end{proposition}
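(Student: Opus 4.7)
The plan is to apply the Fran\c{c}oise recursion (displayed just before the statement) at level $k=2$, compute $r_1$ explicitly via the Gelfand--Leray form of $\omega_1$, and then identify $\int r_1\,\omega_1$ with the iterated integral $\int \omega_1 \omega_1'$ in the sense of (\ref{iteratedintegral}).

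First I apply the recursion at $m=2$: since $\Omega_1=\omega_1$ and the only admissible pair $i+j=2$ with $i,j\geq 1$ is $(i,j)=(1,1)$, we obtain $\Omega_2=\omega_2+r_1\omega_1$, where $r_1$ is determined by the identity $\omega_1=dR_1+r_1\,dH$. The hypothesis $M_1\equiv 0$ is precisely what allows $R_1$ to be built as a single-valued primitive of $\omega_1|_{\{H=h\}}$ on a full neighbourhood of the unperturbed family, with $r_1$ absorbing the defect in the transverse direction. This yields
\[
M_2(h)=\int_{\{H=h\}}\omega_2+\int_{\{H=h\}}r_1\,\omega_1.
\]

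Next I compute $r_1$. Taking $d$ of $\omega_1=dR_1+r_1\,dH$ gives $d\omega_1=dr_1\wedge dH$, so $dr_1$ coincides with the Gelfand--Leray form $\omega_1'$ of $\omega_1$ modulo $dH$. A direct calculation from $\omega_1=(p_1(x)+yq_1(x))\,dx$ gives $d\omega_1=-q_1(x)\,dx\wedge dy$, and inserting $dy=-y^2\,dH+y^2 a(x)\,dx$ (which follows from $H=1/y+A(x)$) produces $\omega_1'=-y^2 q_1(x)\,dx$, as asserted. Integrating along $\{H=h\}$ parameterized by $x\in[0,1]$, with the normalization $r_1(0,y(0,h))=0$, gives
\[
r_1(x,y(x,h))=\int_0^x \omega_1'=-\int_0^x y(s,h)^2\,q_1(s)\,ds.
\]
A different admissible choice of Fran\c{c}oise primitives $(R_1+S(H),\,r_1-S'(H))$ shifts $\int r_1\,\omega_1$ by $-S'(h)\,M_1(h)$, which vanishes by hypothesis; hence the formula is insensitive to the normalization.

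Finally, substituting this expression for $r_1$ into the second term and comparing with the recursive definition (\ref{iteratedintegral}) of an iterated integral of length two,
\[
\int_{\{H=h\}}r_1\,\omega_1=\int_0^1\bigl(p_1(t_1)+y(t_1,h)q_1(t_1)\bigr)\!\left(\int_0^{t_1}\!\!(-y(t_2,h)^2 q_1(t_2))\,dt_2\right)dt_1=\int_{\{H=h\}}\omega_1\omega_1',
\]
which gives the stated formula. The main obstacle is notational rather than computational: matching the ordering convention of (\ref{iteratedintegral}) with that of the Fran\c{c}oise recursion, and verifying the insensitivity to the choice of primitives. Once those two points are pinned down as above, the conclusion is pure substitution.
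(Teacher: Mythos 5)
Your proposal is correct and follows essentially the same route as the paper: Fran\c{c}oise's recursion at level two, identification of $dr_1$ with the Gelfand--Leray form $\omega_1'=-y^2q_1\,dx$ via $H=1/y+A(x)$, the normalization $r_1=\int_0^x\omega_1'$, and the recognition of $\int r_1\,\omega_1$ as the length-two iterated integral $\int\omega_1\omega_1'$ in the sense of the paper's recursive definition. Your extra remark on the insensitivity to the choice of primitives $(R_1,r_1)$ under $M_1\equiv 0$ is a small but welcome addition not spelled out in the text.
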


The hypothesis $M_1=0$ is of   interest for us, as it will allow to compute the tangent space to the center  set at the point $(a,0)$, see the next section. In full generality this vanishing problem is solved by  Pakovich and Muzychuk \cite{pamu09}. For our purposes however, the polynomial $a(x)$ can be taken in a general position, as in the following  
\begin{theorem}[Christopher \cite{chri00}]
\label{ccth}
Assume that $a(0)\neq 0$ and $a(1)\neq 0$. 
The multivalued transcendental function
$$
I(h) =  \int_0^1 \frac{q(x)}{h - A(x)}  dx  
 $$
vanishes identically,   if and only if the polynomials $Q = \int q$ and $A$ satisfy the following "Polynomial Composition Condition" (PCC) : 
\begin{center}
There exist polynomials $\tilde Q, \tilde A, W$, such that
$$A= \tilde A \circ W, Q= \tilde Q \circ W  , W(0)=W(1) .
$$
\end{center}
\end{theorem}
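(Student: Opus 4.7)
My plan treats the two directions separately, with sufficiency a direct change of variables and necessity the heart of the matter.

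\textbf{Sufficiency.} Assume $A = \tilde A \circ W$, $Q = \tilde Q \circ W$ with $W(0) = W(1)$. Expanding $I$ as a Laurent series at infinity,
\[
I(h) = \sum_{k \geq 0} h^{-k-1} \int_0^1 q(x) A(x)^k\, dx,
\]
it suffices to show each moment $\int_0^1 q\, A^k\, dx$ vanishes. Since $q\, dx = dQ = \tilde Q'(W)\, dW$, the substitution $w = W(x)$ converts the moment to
\[
\int_{W(0)}^{W(1)} \tilde Q'(w)\, \tilde A(w)^k\, dw = F_k(W(1)) - F_k(W(0)),
\]
where $F_k$ is an antiderivative of the polynomial $\tilde Q'(w)\, \tilde A(w)^k$. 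This is zero since $W(0) = W(1)$.

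\textbf{Necessity.} Assume $I(h) \equiv 0$, equivalently $\int_0^1 q\, A^k\, dx = 0$ for every $k \geq 0$. The strategy is to build $W$ algebraically using L\"uroth's theorem. Apply L\"uroth to the subfield $\mathbb{C}(A, Q) \subseteq \mathbb{C}(x)$: there exists $W \in \mathbb{C}(x)$ with $\mathbb{C}(A, Q) = \mathbb{C}(W)$, so $A = \tilde A(W)$ and $Q = \tilde Q(W)$ for some rational $\tilde A, \tilde Q$. Because $A, Q$ are polynomials whose only pole is at $x = \infty$, applying a M\"obius transformation to $W$ that sends the common pole of $\tilde A, \tilde Q$ to $\infty$ turns $W$ into a polynomial, and then $\tilde A, \tilde Q$ become polynomials as well (their poles, via the composition, can only lie at $W(\infty) = \infty$). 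It remains to show $W(0) = W(1)$; for this I reverse the sufficiency computation. The moment identities translate to $F_k(W(0)) = F_k(W(1))$ for every $k$, where $F_k$ is an antiderivative of $\tilde Q'(w)\, \tilde A(w)^k$. Since $\{\tilde Q'(w)\, \tilde A(w)^k\}_{k\geq 0}$ spans the $\mathbb{C}[\tilde A]$-module generated by $\tilde Q'$, an approximation argument (Weierstrass applied to the pushforward of $\tilde Q'(w)\, dw$ under $\tilde A$ along the arc $W([0,1])$) together with the L\"uroth-minimality of $W$ forces either $\tilde Q' \equiv 0$ (the trivial case $q \equiv 0$, where PCC holds vacuously) or $W(0) = W(1)$.

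\textbf{Main obstacle.} Two steps require care. First, the polynomiality of the L\"uroth decomposition: L\"uroth supplies only rational $W, \tilde A, \tilde Q$, and one must use that $A, Q$ have their unique pole at infinity, combined with a single simultaneous M\"obius normalization, to force all three to be polynomials. Second, and more substantively, is the deduction of $W(0) = W(1)$ from the infinite family of matching antiderivative identities: $\tilde A$ may fail to be injective on the path from $W(0)$ to $W(1)$, opening the door to cancellations in the pushforward measure that could satisfy every moment condition without forcing $W(0) = W(1)$. The hypothesis $a(0), a(1) \neq 0$ guarantees that $0, 1$ are regular points of $A$, excluding the endpoint-branching degeneracies that would obstruct this translation from the moment problem to the composition identity.
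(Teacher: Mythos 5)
Your sufficiency argument is correct and is essentially the paper's (the integrals $\int_0^1 q\,A^k\,dx$ are pull-backs of exact forms over a path with coinciding endpoints). The necessity direction, however, has a genuine gap, and it sits exactly where the content of the theorem lies. You apply L\"uroth to the field $\C(A,Q)$, but nothing in your argument shows that this field is a \emph{proper} subfield of $\C(x)$. If $A$ and $Q$ have no common composition factor then $\C(A,Q)=\C(x)$, L\"uroth returns $W=x$ (up to M\"obius), and $W(0)=0\neq 1=W(1)$; to rule this out you must derive a contradiction with $I\equiv 0$, which is precisely the theorem. The paper applies L\"uroth to a different field: the field of rational functions $R$ satisfying $R(x_i(h))\equiv R(x_j(h))$, where $x_i(h),x_j(h)$ are the branches of $A^{-1}$ passing through $0$ and $1$ at $h=0$ (well defined because $a(0)\neq0$, $a(1)\neq 0$ and $A(0)=A(1)=0$). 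That field is proper by construction (it does not contain $x$, since $x_i\neq x_j$) and contains $A$ trivially; the real work is showing it contains $Q$. This is done by analytic continuation: the monodromy of the path $[0,1]$, viewed as a cycle on the curve $\Gamma_h'$ obtained by identifying $(0,h)$ and $(1,h)$, produces the cycles $\gamma_i-\gamma_j$ around the corresponding punctures, and a residue computation reduces $\int_{\gamma_i-\gamma_j} I$ to $-2\pi i\,\frac{d}{dh}\bigl[Q(x_i(h))-Q(x_j(h))\bigr]$, whence $Q(x_i(h))\equiv Q(x_j(h))$. Then $W(0)=W(1)$ is simply $W(x_i(0))=W(x_j(0))$.

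Your proposed substitute for this step --- Weierstrass approximation applied to the pushforward of $q\,dx$ under $A$ --- cannot close the gap. At best it yields the vanishing of the measure $A_*(q\,dx)$, i.e.\ a single linear relation $\sum_i \epsilon_i\, q(x_i(t))/A'(x_i(t))=0$ summed over \emph{all} branches of $A^{-1}$ lying over $[0,1]$, with orientation signs. Passing from this global sum to the pairwise identity $Q(x_i(h))=Q(x_j(h))$ for the two endpoint branches is exactly what the monodromy argument accomplishes, and it is exactly what fails when $0$ or $1$ is a critical point of $A$: the Chebyshev-type counterexamples to the composition conjecture for the moment problem ($A=T_6$, $Q=T_2+T_3$) satisfy all moment conditions while violating (PCC), and there the endpoints are critical points of $A$. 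Since your argument never actually invokes $a(0)\neq 0$, $a(1)\neq 0$ beyond a closing remark, it would "prove" a false statement; the appeal to "L\"uroth-minimality" in the last step is not an argument. To repair the proof you need the analytic continuation of $I(h)$ and the Picard--Lefschetz analysis of the cycle $[0,1]$ on $\Gamma_h'$, or an equivalent device isolating the two endpoint branches.
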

Before recalling  the elegant proof of Christopher, we put $I$ in the broader context of the Picard-Lefschetz theory.

The  function $I(h)$ is well defined for sufficiently big $h$, and has an analytic continuation in a complex domain to certain multivalued function.
It is in fact  an Abelian integral depending on a parameter. 
 More precisely, consider the genus zero affine curve
$$
\Gamma_h= \{(x,y)\in \C^2: \frac 1y +A(x) = h \} .
$$
It is a Riemann sphere with $n+2$ removed points, provided that $h \neq 0$. The removed points correspond to $(x=x_i(h), y=0)$, where $A(x_i(h))\equiv h$, and to $(x=\infty,y=0)$.
Given a divisor $m=P_0+P_1$ on $\Gamma_h$, where 
$$P_0 = ( 0,h), \;P_1=(1,h) $$
we define 
a singular algebraic curve $\Gamma_h'$. As a topological space it is just the curve $\Gamma_h$ with the two points $P_0$ and $P_1$ identified to a point $m$. The structural sheaf of $\Gamma_h'$ is the same as the structural sheaf of $\Gamma_h$, except at the point $m\in \Gamma_h'$. At this point a function $f$ is said to be regular, if it is is regular on $\Gamma_h$, and moreover $f(P_0)=f(P_1)$. 
The path $[0,1]$ connecting the points $x=0$ and $x=1$   closes on the singular algebraic curve $\Gamma_h'$. 
The function $I(h)$ is an Abelian integral on $\Gamma_h'$.
We note that the above procedure is easily generalized to arbitrary divisor $m$ on $\Gamma_h$, which fits the generalized moment problem, as defined in \cite[Conjecture 1.7]{bfy99}.

The homology group $H_1(\Gamma_h', \Z)$ is of dimension $n+2$.
 It is generated by $n+1$ simple closed loops $\gamma_i= \gamma_i(h)$ which make one turn around the $n+1$ punctures $x_i(h)$ on $\Gamma_h$,  $A(x_i(h))-h=0$, as well the loop connecting $0$ and 
 $1$ on the singularized curve $ \Gamma_h'$. The monodromy of the loop $[0,1]$ is shown on the figure. It follows that the orbit of $[0,1]\in H_1(\Gamma_h', \Z) $ under the action of the fundamental group of 
 $\C\setminus \Delta$ contains the  $\gamma_i - \gamma_j$, where $A(x_i(0))=A(x_j(0)) = 0$.
The Abelian integral $I(h)$ on the Riemann sphere $\Gamma_h$, can be  presented as a zero-dimensional Abelian integral as follows
\begin{align*}
\int_{\gamma_i(h) - \gamma_j(h)}   yq(x) dx &= \int_{\gamma_i(h) - \gamma_j(h)}  \frac{q(x)}{h-A(x)} dx \\
&= -2\pi i (\frac{q(x_i(h))}{A'(x_i(h))} - \frac{q(x_j(h))}{A'(x_j(h))} )\\
&= -2\pi i \frac{d}{dh} [ \frac{q}{A'}(x_i(h)) -  \frac{q}{A'}(x_j(h)) \\
&= -2\pi i \frac{d}{dh} [ Q(x_i(h)) -  Q(x_j(h))]
\end{align*}
where
$$
Q(x) = \int q(x)dx
$$
is a primitive of $q$, 
and  $x_i(h)$ are the roots of the polynomial $A(x)-h$ (therefore $A'(x_i(h)) . x_i'(h) \equiv 1$).

We denote, 
\begin{align}
J(h) = \int_{x_i(h)-x_j(h)} Q = Q(x_i(h)) -  Q(x_j(h))
\end{align}
and call $J$ an Abelian integral of dimension zero along the zero-cycle 
$$x_i(h)-x_j(h) \in H_0(\{A(x)=h \}, \Z) $$
(\cite{gamo07}, Definition 1 ).
If the Abelian integral $I(h)$ vanishes identically, then the same holds true for $J'(h)$, hence $J(h)= const.$ and it is easy to check that the constant is zero,  $J(h) \equiv 0$. 
The set of rational functions $Q$ such that $ Q(x_i(h)) \equiv Q(x_j(h)$ is a subfield of the field of all rational functions $\C(x)$. By the L\"uroth theorem this subfield is of the form $\C(W)$ for suitable rational function $W$. It follows that $Q= \tilde Q \circ W, A = \tilde A \circ W$ and it is easily seen in this case that $W$ is a polynomial. We may also assume that $W(x_i(h))\equiv W(x_j(h)) $,  as in the opposite case we may 
replace the variable $x$ by $W$ and reason by induction on the polynomials $Q= \tilde Q \circ W,$ and $ A = \tilde A \circ W $.

We have proved that if the Abelian integral $I(h)$ vanishes identically, then the polynomials $Q$ and $A$ satisfy the PCC, and the opposite claim is obvious. This completes the proof of Theorem \ref{ccth}
\endproof

\section{Irreducible components of the Center set}
\label{section4}

An affine algebraic variety $V$ in $\C^n$ is the common zero locus of a finite collection of polynomials $f_i \in \C[z_1,\dots ,z_n]$. The variety $V$ is said to be irreducible, if for any pair of closed varieties $V_1,V_2$ such that $V= V_1\cup V_2$, either $V_1=V$ or $V_2=V$. Of course, it might happen that a variety $V$ is reducible $V= V_1\cup V_2$, where $V_1,V_2\neq V$. In this case we may ask whether $V_1$ and $V_2$ are further reducible and so on. It is a basic fact of commutative algebra that in this way only a finitely many irreducible subvarieties $V_i\subset V$ can be found, and more precisely

\emph{Any variety $V$ can be uniquely expressed as a finite union of irreducible varieties $V_i$ with $V_i \subsetneqq V_j$ for $i\neq j$. \cite{harr95}}

The  varieties $V_i$ which appear in the finite decomposition $$V = \cup_i V_i$$ are the\emph{ irreducible components} of $V$. 

Let $W \subset V$ be another algebraic variety. Is $W$ an irreducible component of $V$? It is usually easy to verify, whether $W$ is irreducible. It is much harder to check that $W$ is an irreducible component of $V$. Indeed, it might happen that $W \subsetneqq V_i$ where $V_i$ is an irreducible component of $V$. To verify this, one may compare the dimensions of the tangent spaces $T_x W$ and $T_x V$ at some smooth point $x\in V\cap W$ (one point $x$ is enough!).
Then \emph{  $W \subsetneqq V_i$ if and only if $T_x W  \subsetneqq T_x V$ }. Of course, there might be no way to know that $x$ is a smooth point, in which case  we   use  the tangent cones $TC_x W$ and $TC_x V$. For every $x\in W$ on an irreducible variety $W$ holds $\dim TC_x W = \dim W$. Thus, for irreducible varieties 
$W \subset V$ holds 
$$
\dim TC_x W  < \dim TC_x \Leftrightarrow W  \subsetneqq V .
$$
The choice of $x \in W$ is irrelevant, which allows a great flexibility. 

The above approach will be applied in the case when $V$ si the center set of the equation (\ref{ode}), and $W$ is a subset of equations with a center.
In the planar case (\ref{system}) this approach was developped by Movasati \cite{mova04}. He observed that the vanishing of the first Melnikov function, related to one-parameter deformations (arcs) of systems (\ref{system}) with a center,  provides equations for the tangent cspace $T_x W$, while the vanishing of the second Melnikov function provides equations for the tangent cone $TC_x W$.
This remarkable connection between algebraic geometry and dynamics will allow us to go farther in the description of irreducible components of the center set. 
We adapt the approach of Movasati \cite{mova04}  and Zare \cite{zare17} to (\ref{ode}) in the context of 
 the set $\mathcal A_n$ of Abel differential equations
\begin{align}
\label{abel2}
\frac{dy}{dx} = a(x) y^2+ b(x)y^3
\end{align}
parameterised by the polynomials $a(x), b(x)$ of degree at most $n$. They form therefore a vector space of dimension $2n+2$, and consider the subset $\mathcal C_n\subset \mathcal A_n$ of Abel differential equations having a  center on the interval $0\leq x\leq 1$. As we saw in the preceding section, $\mathcal C_n$ is defined by finitely many polynomial relations $c_n(a,b) = 0$ and therefore is an algebraic set.

\subsection{Universal centers define irreducible components of the center set}
\label{section41}

If the integer $k>1$ divides $n+1$, then we denote by $\mathcal U_{n/k}\subset \mathcal C_n\subset \mathcal A_n$ the algebraic closure of the set of pairs of polynomials $(a,b)$
(or   Abel equations (\ref{abel2})), such that the following Polynomial Composition Condition (PCC) is satisfied
 
\emph{There exist polynomials $\tilde A, \tilde B, W$ of degrees  $(n+1)/k$, $(n+1)/k$, $k$, such that}
\begin{equation} \tag{PCC} A= \tilde A \circ W, B= \tilde B \circ W  , W(0)=W(1) .
\end{equation}

 The differential form associated to (\ref{abel2})
 $$
dy - (a(x) y^2+ b(x)y^3) dx = dy - y^2 dA(x) - y^3 dB(x)
 $$
 is a pull back of the differential form
\begin{equation}
\label{pback}
 dy- (\tilde{A}'(w) y^2 + \tilde{B}'(w) y^3) dw = dy - y^2 d \tilde{A}(w) - y^3 d \tilde{B}(w)
 \end{equation}
 under the map $(x,y) \to (w,y)$, where $w=W(x)$. In other words the equation (\ref{abel2}) is obtained from
 $$
 \frac{dy}{dw} = \tilde{A}'(w) y^2 + \tilde{B}'(w) y^3
 $$
 via the substitution $w=W(x)$. 
 This, combined to $W(0)=W(1)$ implies that the set of Abel equations $\mathcal U_{n/k}$ have a center at $y=0$ along $[0,1]$.  Of course one could check directly that the center conditions $c_n(a)=0$  are satisfied for all $n$ (Theorem \ref{brf}). Indeed, the iterated integrals  
 $
  \int_{x_0}^{x_1} a_{i_1} \cdots a_{i_k} 
  $
  vanish, because they are pull backs under $W$ of iterated integrals along an interval, contractible to the point 
 $W(x_0)=W(x_1)$. 
  Following Brudnuyi \cite{brud06}, we say that  (\ref{abel2}) determines an \emph{universal center} if and only if
   $$
  \int_{x_0}^{x_1} a_{i_1} \cdots a_{i_k} = 0, \forall i_j \in \N .  
  $$
  
 \emph{ It is shown then that a center is universal, if and only if the corresponding equation  (\ref{abel2})   is a pull back under an appropriate polynomial as above, see Brudnyi \cite[Corollary 1.20]{brud06}.
}  Thus, the universal centers are exactly those, obtained by a polynomial pull back in the sense  (\ref{pback}), see the Polynomial Composition Condition  (PPC).

 Note that the universal center set $\mathcal U_{n/k}$ is an irreducible algebraic variety, as a Zariski open subset of it is  parametrized by the polynomials $\tilde A, \tilde B, W$ of degrees respectively $(n+1)/k, (n+1)/k, k$. The main result of the section is
 \begin{theorem}
 \label{pullback}
 The  algebraic sets  $\mathcal U_{n/k}$ are irreducible components of the center set $\mathcal C_n$ of the Abel equation
$$
\frac{dy}{dx} = a(x) y^2+ b(x)y^3, \deg a, \deg b \leq n .
$$
 \end{theorem}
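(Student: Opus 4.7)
My plan follows the Movasati--Zare tangent space strategy outlined at the top of section \ref{section4}. The variety $\mathcal U_{n/k}$ is clearly irreducible: it is the Zariski closure of the image of the polynomial morphism $(\tilde A, \tilde B, W) \mapsto ((\tilde A\circ W)', (\tilde B\circ W)')$ from the product of parameter spaces of $\tilde A,\tilde B$ (degree $(n+1)/k$) and $W$ (degree $k$, with the single linear constraint $W(0)=W(1)$). Hence to show that $\mathcal U_{n/k}$ is an irreducible component of $\mathcal C_n$, it suffices to exhibit one point $(a_0,b_0)\in\mathcal U_{n/k}$ at which the Zariski tangent cone satisfies
$$TC_{(a_0,b_0)}\mathcal C_n \subseteq T_{(a_0,b_0)}\mathcal U_{n/k},$$
the reverse inclusion being automatic from $\mathcal U_{n/k}\subseteq\mathcal C_n$.

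First I would pick a generic point. Take $(a_0,b_0)$ coming from a generic triple $(\tilde A_0,\tilde B_0,W)$ in which $W$ is a \emph{rigid} polynomial of degree $k$ with $W(0)=W(1)$, meaning that $W$ has no nontrivial compositional factorization and that $\tilde A_0,\tilde B_0$ have no common compositional factor through any polynomial other than $W$ itself; genericity also ensures the Christopher hypothesis $a_0(0),a_0(1)\ne 0$. An infinitesimal deformation in $\mathcal U_{n/k}$ at this point corresponds to perturbing $(\tilde A_0,\tilde B_0,W)\mapsto(\tilde A_0+\epsilon \delta\tilde A,\tilde B_0+\epsilon \delta\tilde B,W+\epsilon \delta W)$ with $\delta W(0)=\delta W(1)$, and a direct computation shows
$$\delta a = \delta\tilde a(W)W' + \tilde a_0'(W)W'\delta W + \tilde a_0(W)\delta W',\qquad \delta b = \dots$$
with the obvious analogue for $\delta b$; this spans $T_{(a_0,b_0)}\mathcal U_{n/k}$.

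Second, I would identify $T_{(a_0,b_0)}\mathcal C_n$ via the first Melnikov function. A perturbation $(a_0+\epsilon p,b_0+\epsilon q)$ lies in $\mathcal C_n$ to first order in $\epsilon$ if and only if the first nonzero Melnikov function $M_1(h)$ of (\ref{perturbed1}) vanishes identically. The key point is that because $(a_0,b_0)$ is a pull-back by $W$, the unperturbed first integral also factors as $H_0(x,y)=\tilde H_0(W(x),y)$. Changing variable $w=W(x)$ in the Fran\c{c}oise/Gelfand--Leray representation of $M_1$ reduces the identity $M_1\equiv 0$ to a family of moment-type vanishing conditions for the polynomials $P=\int p$ and $Q=\int q$ along the zero-cycles $w=W(0)-W(1)=0$ of suitable functions of $\tilde A_0$ and $\tilde B_0$.

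Third, I would invoke Christopher's Theorem \ref{ccth} (and the zero-dimensional Abelian integral framework used in its proof via L\"uroth) on these moment conditions. The conclusion is that $P,Q$ themselves must factor as pull-backs through some polynomial $W^\ast$ with $W^\ast(0)=W^\ast(1)$, and by the assumed rigidity of $W$ we have $W^\ast=\psi\circ W$ for an affine $\psi$. Thus
$$p\,dx = \delta\tilde a(W)W'\,dx + (\text{contribution from }\delta W),\qquad q\,dx = \delta\tilde b(W)W'\,dx+(\text{contribution from }\delta W),$$
which is exactly the form of a tangent vector to $\mathcal U_{n/k}$; comparing dimensions then yields the desired equality.

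The main obstacle is the third step: Christopher's theorem is stated for perturbations of the pure equation $dy=a(x)y^2\,dx$, so applying it at a point with $b_0\ne 0$ requires a generalization (or a reduction via the pull-back under $W$ followed by the second-order Fran\c{c}oise iterate, as in Proposition immediately after Theorem \ref{ccth}). Working out this reduction carefully --- verifying that no additional tangent directions appear from $p$-perturbations that are invisible to $M_1$ but killed only by $M_2$ --- and checking the rigidity hypothesis on generic $W$ (which uses that a generic polynomial of degree $k$ has trivial compositional monodromy) are the two technical crunches.
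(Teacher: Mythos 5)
Your overall strategy (irreducibility of $\mathcal U_{n/k}$ by parametrization, then comparison of $TC_{(a_0,b_0)}\mathcal C_n$ with $T_{(a_0,b_0)}\mathcal U_{n/k}$ at one well-chosen point via Melnikov functions and Christopher's theorem) is the same as the paper's. But there are two genuine gaps, and the second is the heart of the matter.

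First, the base point. You insist on a generic $(a_0,b_0)$ with $b_0\neq 0$. At such a point the unperturbed equation $dy=(a_0y^2+b_0y^3)dx$ has no explicit global first integral of the form $\frac1y+A(x)$, so the entire machinery of section \ref{section3} (Fran\c{c}oise's algorithm, the representation $M_1(h)=\int_0^1 p\,dx+\int_0^1\frac{q}{h-A}\,dx$, and Christopher's Theorem \ref{ccth}) is simply not available; you acknowledge this needs ``a generalization'' but that generalization is not in the paper and is not routine. The paper avoids the issue entirely by taking the base point $(a,0)$ with $A=\tilde A\circ W$ --- still a point of $\mathcal U_{n/k}$, and one point suffices.

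Second, and more seriously, the first Melnikov function cannot do the job you assign to it. With $H=\frac1y+A$, the identity $M_1\equiv 0$ splits into $\int_0^1 p_1\,dx=0$ and $\int_0^1\frac{q_1}{h-A(x)}\,dx\equiv 0$; Christopher's theorem applies only to the second and yields the composition condition for $Q_1$ alone. The polynomial $P_1$ is constrained by a single linear equation, so the locus $\{M_1=0\}$ is far larger than $T_{(a,0)}\mathcal U_{n/k}$. Indeed your own conclusion in step three --- that $P$ and $Q$ ``must factor as pull-backs through $W^\ast$'' --- contradicts your own tangent-space computation, which correctly contains the non-pull-back direction $\tilde A'(W)\,\delta W$ coming from deforming $W$. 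The paper closes this gap by computing the \emph{second} Melnikov function as an iterated integral, $M_2=\int\omega_1\omega_1'+\int\omega_2$, reducing it (using the shuffle relation and $M_1=0$) to $\int_0^1\frac{q_1P_1-Q_2a}{(h-A)^2}\,dx$, applying Christopher's theorem to this expression to get that $q_1P_1-Q_2a$ is composite in $W$, and then solving a B\'ezout identity in $\C[W]$ to conclude $P_1=R_2(W)+R\cdot\tilde A'(W)$ --- exactly the tangent space of Proposition \ref{cone}. This $M_2$ computation is not a ``technical crunch'' to be deferred; it is the step that actually pins down the $p$-directions, and without it the inclusion $TC_{(a,0)}\mathcal C_n\subseteq T_{(a,0)}\mathcal U_{n/k}$ is not established.
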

 
 We shall illustrate first the idea of the proof of Theorem \ref{pullback} on the rather elementary case $k=n$. The closure of the universal center set $ \mathcal U_{n/n+1} $ consists of Abel equations (\ref{abel2}) such that
 $$
 \deg a, \; \deg b \leq n, \; \int_0^1 a(x)dx = \int_0^1 b(x)dx = 0 
 $$
 and moreover the polynomials $a(x), b(x)$ are co-linear. Thus, $ \mathcal U_{n/n+1} $ is identified to the  vector space of pairs of polynomials $(a(x),b(x))$ with the above properties, and is therefore of dimension $n+1$. 
Consider now the point $(a(x),0) \in U_{n/n+1} $ where $a(x)$ is a degree $n$ polynomial. 
\begin{proposition}
The tangent space $T_{(a,0)} \mathcal U_{n/n+1} $ is a vector space of dimension $n+1$,
which
consists of pairs of
polynomials $(p,q)$ of degree at most $n$, such that
$q$ and $a$ are co-linear polynomials,  and $\int_0^1 p(x)dx = 0$
\end{proposition}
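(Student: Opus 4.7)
The plan is to exhibit $\mathcal U_{n/n+1}$ as an explicit complete intersection-like variety, linearize its defining equations at $(a,0)$, and then match dimensions to conclude that this point is smooth and the tangent space is as claimed.

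First I would unpack the PCC for the extreme case $k=n+1$: here $\tilde A,\tilde B$ have degree $(n+1)/k = 1$, so they are affine, $\tilde A(w)=\alpha w+\alpha_0$, $\tilde B(w)=\beta w+\beta_0$, and $W$ has degree $n+1$ with $W(0)=W(1)$. Consequently $a(x)=A'(x)=\alpha W'(x)$ and $b(x)=B'(x)=\beta W'(x)$ are colinear, and $\int_0^1 a\,dx = \int_0^1 b\,dx = 0$ because $A(0)=A(1)$, $B(0)=B(1)$. The converse is immediate. Hence $\mathcal U_{n/n+1}$ is precisely the set of pairs $(a,b)\in \C[x]_{\leq n}^2$ satisfying the two linear conditions $\int_0^1 a\,dx=\int_0^1 b\,dx=0$ together with the colinearity equations $a_ib_j-a_jb_i=0$ for all $i,j$, where $a=\sum a_ix^i$, $b=\sum b_jx^j$. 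A parameter count (or the parametrization $(f,\alpha,\beta)\mapsto(\alpha f,\beta f)$ with $f\in V_n:=\{f\in\C[x]_{\leq n}:\int_0^1 f=0\}$, whose generic fiber has dimension $1$) confirms $\dim\mathcal U_{n/n+1}=n+1$.

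Next I would compute the Zariski tangent space at $(a,0)$ by linearizing along a curve $(a+\epsilon p,\epsilon q)$. The linear conditions contribute $\int_0^1 p\,dx=0$ and $\int_0^1 q\,dx=0$. The quadratic colinearity equations linearize, at $b=0$, to $a_iq_j-a_jq_i=0$ for all $i,j$; since $a\neq 0$ (in fact $\deg a=n$), this means precisely that the coefficient vectors of $a$ and $q$ are proportional, i.e.\ $q\in\C\cdot a$. Notice that this latter condition automatically forces $\int_0^1 q\,dx = \lambda\int_0^1 a\,dx = 0$, so the integral condition on $q$ is redundant. Consequently the Zariski tangent space is
\[
T_{(a,0)}\mathcal U_{n/n+1} \;=\; \Bigl\{\,(p,q)\in\C[x]_{\leq n}^2 \;:\; \int_0^1 p\,dx=0,\;\; q\in\C\cdot a\,\Bigr\},
\]
which has dimension $n$ (for $p$) plus $1$ (for the scalar $\lambda$ in $q=\lambda a$), hence $n+1$.

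Finally, since $\dim T_{(a,0)}\mathcal U_{n/n+1} = n+1 = \dim\mathcal U_{n/n+1}$, the point $(a,0)$ is a smooth point of the irreducible variety $\mathcal U_{n/n+1}$, so its Zariski tangent space coincides with the geometric tangent space, and the proposition follows. The only point requiring care is the linearization of the colinearity relations at $(a,0)$: one must check that evaluating $a_ib_j-a_jb_i$ at $b=0$ kills the $\epsilon p\cdot \epsilon q$ cross terms, leaving the clean relation $a_iq_j=a_jq_i$, and this is where the hypothesis $a\neq 0$ (equivalently $\deg a=n$) is used to translate the rank condition into $q\in\C\cdot a$. No deeper obstacle arises.
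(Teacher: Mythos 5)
Your argument is correct: the paper leaves this proposition to the reader, and your proof (unpack the PCC for $W$ of degree $n+1$ to identify $\mathcal U_{n/n+1}$ with the colinear pairs having zero mean, linearize the defining equations at $(a,0)$, and use the sandwich $\dim\mathcal U_{n/n+1}\leq\dim T_{(a,0)}\mathcal U_{n/n+1}\leq n+1$ to conclude smoothness and identify the tangent space) is exactly the intended computation. The only implicit hypothesis worth stating is that $\int_0^1 a\,dx=0$ (so that $(a,0)$ indeed lies on $\mathcal U_{n/n+1}$), which you use when observing that $q\in\C\cdot a$ makes the condition $\int_0^1 q\,dx=0$ redundant.
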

The proof is left to the reader.
Next, we compute the tangent cone $TC_{(a,0)} \mathcal C_n $ at $(a,0)$ to the center set $\mathcal C_n$. To avoid complications, we choose $a$ to be a non-composite polynomial.  
\begin{proposition}
\label{tangentc1}
Lat $a$ be a non-composite polynomial of degree $n$, such that $a(0)\neq 0, a(1)\neq 0$. Then
$$
TC_{(a,0)} \mathcal C_n = T_{(a,0)} \mathcal U_{n/n+1}
$$
\end{proposition}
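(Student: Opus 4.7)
The plan is to establish the two inclusions $T_{(a,0)}\mathcal U_{n/n+1} \subseteq TC_{(a,0)}\mathcal C_n$ and $TC_{(a,0)}\mathcal C_n \subseteq T_{(a,0)}\mathcal U_{n/n+1}$; their combination is the statement of the proposition.

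For the first inclusion I would realize each tangent vector by an explicit analytic arc inside $\mathcal U_{n/n+1}\subseteq \mathcal C_n$. Given $(p,q)\in T_{(a,0)}\mathcal U_{n/n+1}$, i.e. $\int_0^1 p\,dx = 0$ and $q = \lambda a$ for some $\lambda\in\C$, consider the arc $t\mapsto (a+tp,\,t\lambda(a+tp))$. Its two components are proportional polynomials of degree at most $n$ with vanishing integrals, so the arc lies in $\mathcal U_{n/n+1}$; its initial velocity is $(p,\lambda a)=(p,q)$, placing $(p,q)$ in $TC_{(a,0)}\mathcal U_{n/n+1}\subseteq TC_{(a,0)}\mathcal C_n$.

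For the reverse inclusion I would take $(p_1,q_1)\in TC_{(a,0)}\mathcal C_n$ and a witnessing analytic arc
\[
(a(\varepsilon),b(\varepsilon)) = \bigl(a+\varepsilon p_1+\varepsilon^2 p_2+\cdots,\; \varepsilon q_1+\varepsilon^2 q_2+\cdots\bigr)
\]
lying in $\mathcal C_n$ for all small $\varepsilon$. Since each equation on the arc has the identity return map, all Melnikov functions of the deformation vanish; in particular $M_1\equiv 0$. The key point is that $M_1$ depends only on the first-order terms, so it yields a genuine constraint on $(p_1,q_1)$. Using the formula recalled in Section \ref{section3},
\[
M_1(h) = \int_0^1 p_1(x)\,dx + \sum_{k=0}^{\infty} h^{-k-1}\int_0^1 q_1(x) A^k(x)\,dx,
\]
the condition $M_1\equiv 0$ is equivalent to $\int_0^1 p_1\,dx=0$ together with the polynomial moment conditions $\int_0^1 q_1(x) A^k(x)\,dx = 0$ for every $k\geq 0$.

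Christopher's theorem (Theorem \ref{ccth}), which applies because $a(0),a(1)\neq 0$, then produces polynomials $\tilde A$, $\tilde Q$, $W$ with $A=\tilde A\circ W$, $Q_1=\tilde Q\circ W$, $W(0)=W(1)$, where $Q_1=\int q_1\,dx$. The non-composite hypothesis on $a$ forces $\deg W=1$ or $\deg\tilde A=1$; the former is ruled out because a degree-one $W$ satisfying $W(0)=W(1)$ must be constant, which would make $A$ constant. Thus $\tilde A$ is linear, $W$ is an affine function of $A$, and $Q_1$ is a polynomial in $A$. The degree bound $\deg Q_1\leq n+1=\deg A$ restricts this polynomial to degree at most one, so $Q_1=cA+c_0$ and hence $q_1 = c\,a$. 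Together with $\int_0^1 p_1=0$, this places $(p_1,q_1)$ in the tangent space described by the preceding proposition. I expect the subtlest step to be precisely this last reduction, where the non-composite assumption and the degree bound on $q_1$ must be combined to eliminate every nontrivial composition permitted by Christopher's theorem.
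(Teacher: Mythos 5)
Your proposal is correct and follows essentially the same route as the paper's proof: vanishing of the first Melnikov function along a deforming arc, Christopher's Theorem \ref{ccth}, and the indecomposability of $A$ forcing $Q_1$ to be affine in $A$ (your explicit degree count here is in fact slightly more careful than the paper's one-line conclusion). The only point worth adding, which the paper notes in a single sentence, is that a tangent-cone vector may arise as the leading coefficient of an arc at order $\varepsilon^k$ with $k\geq 2$ rather than $k=1$; the identical Melnikov computation applied to the first non-vanishing order gives the same constraints on $(p_1,q_1)$.
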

The above  implies that
\emph{the algebraic set  $\mathcal U_{n/n+1} $ is an irreducible component of the center set   $\mathcal C_n$.}\\
{\em Proof of Proposition \ref{tangentc1}.}
Consider a one-parameter deformation 
\begin{align}
\varepsilon \to (a- \varepsilon p  + \dots, - \varepsilon q  + \dots) 
\end{align}
 of  (\ref{abel2}) at the point $(a,0)$.  For $\varepsilon=0$ the equation is
 $$
 \frac{dy}{y^2} = a(x) dx
 $$
 and has a first integral 
 $H(x,y)= \frac1y + A(x)$
 where $A$ is a primitive of $a$, $A(0)=A(1)$. The perturbed equation is
 $$
 dH - \varepsilon (p(x) + yq(x) dx + \dots = 0 .
 $$
 For sufficiently small $y, \varepsilon$ instead of $y$ we can use $h = H(x,0)$ as a variable and write for the return map $\varphi_\varepsilon$
  $$
 \varphi_\varepsilon (h) = h + \varepsilon M_1(h) + \dots
 $$
 The Melnikov function $M_1$, according to section \ref{section3}, is computed to be
$$
M_1(h) = \int_{H=h} p(x) dx + yq(x) dx = \int_0^1 p(x) dx + \int_0^1 \frac{q(x)}{h - A(x)}  dx  .
$$
Assuming that  for all sufficiently small $\varepsilon$ the deformed Abel equation belongs to the center set $\mathcal C_n$, implies $M_1=0$, which on its turn imposes rather severe conditions on the polynomials $p, q$.
First, $\int_0^1 p(x) dx = 0$ as follows already from (\ref{nc}). The second condition
$$
 \int_0^1 \frac{q(x)}{h - A(x)}  dx \equiv 0
 $$
 is well studied in a number of articles, and is known as the \emph{polynomial moment problem}, e.g. \cite{bry10} and the references there. For the case of a general $A$, see the Addendum by Pakovich in \cite{yomd03}. As $a(0)\neq 0, a(1)\neq 0$ then by Theorem \ref{ccth} we have that
  $\int_0^1 \frac{q(x)}{h - A(x)}  dx \equiv 0$ if and only if the composition condition holds true. As $A$ is supposed to be prime, this means that $A$ and $Q=\int q$ are co-linear polynomials. This completes the proof of Proposition \ref{tangentc1} and   Theorem \ref{pullback} in the case $k=n$.

 Note that in full generality, a vector $(p,q) $ which belongs to the tangent cone is a vector, such that there is a  one-parameter deformation 
$$\varepsilon \to (a+ \varepsilon^k p  + \dots, \varepsilon^k q  + \dots) $$
  at the point $(a,0)$ which belongs to the center set $\mathcal C_n$. The same arguments give the same constraints to the vector $(p,q)$. 
 
 \proof[Proof of Theorem \ref{pullback} in the general case]
Assume that the integer $k>1$ divides $n+1$ and consider the algebraic set $\mathcal U_{n/k}$ of Abel differential equations,  at $y=0$ along  $[0,1]$.
The proof follows the same lines as the case $k=n$, with the notable difference that  the second Melnikov function $M_2$ will be needed.

We compute first the tangent space to $\mathcal U_{n/k}$ at a general point $(a,0)$. 
Consider for this purpose the one-parameter deformation
\begin{align}
\label{abelfoliationeps}
\mathcal F_\varepsilon : \frac{dy}{y^2} = a(x)   dx - \varepsilon \omega_1 - \varepsilon \omega_2 - \dots
\end{align}
where
$$
\omega_i= p_i(x) dx  + y q_i(x) dx 
$$
are polynomial one-forms, $\deg p_i \leq n$, $\deg q_i \leq n$. As before we denote
$$
A = \int a ,\; P_i = \int p_i ,\; Q_i = \int q_i $$
where
$$ A(x)= \tilde{A}(W(x)), W(0)=W(1), P_i(0)=P_i(1), Q_i(0)=Q_i(1) .
$$

 The point $(a,0)$ belongs to $\mathcal U_{n/k} $ if and only if $A= \tilde{A} \circ W$ for some degree $k$ polynomial $W$.
\begin{proposition}
\label{cone}
The tangent space $T_{(a,0) }\mathcal U_{n/k} $ is the vector space of polynomials $(p_1,q_1)$
 such that
$$
P_1(x) = \tilde P_1 \circ W(x)  + R(x).  \tilde A' ( W(x)) , \; Q_1(x) =  \tilde Q_1 (W(x))  
$$
where $\tilde P_1 , \tilde Q_1  $ are arbitrary polynomials of degree at most $(n+1)/k$   and $R= R(x)$ is any degree $k$ polynomial, such that $R(0)=R(1)$.
\end{proposition}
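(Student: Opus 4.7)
The plan is to exploit the explicit rational parametrization of $\mathcal U_{n/k}$ by triples $(\tilde A, \tilde B, W)$ with $\tilde A, \tilde B$ of degree at most $(n+1)/k$, $W$ of degree $k$, and $W(0)=W(1)$, via the map $\Phi:(\tilde A,\tilde B,W)\mapsto(a,b)=((\tilde A\circ W)',(\tilde B\circ W)')$. The point $(a,0)\in\mathcal U_{n/k}$ lies in the image with $\tilde B=0$ and a fixed decomposition $A=\tilde A\circ W$. For the inclusion $\supseteq$ I would construct explicit tangent arcs: given $\tilde P_1,\tilde Q_1$ of degree at most $(n+1)/k$ and $R$ of degree at most $k$ with $R(0)=R(1)$, the one-parameter family
\[
\tilde A_\epsilon=\tilde A+\epsilon\tilde P_1,\qquad \tilde B_\epsilon=\epsilon\tilde Q_1,\qquad W_\epsilon=W+\epsilon R
\]
satisfies $W_\epsilon(0)=W_\epsilon(1)$, so the arc $\Phi(\tilde A_\epsilon,\tilde B_\epsilon,W_\epsilon)$ lies in $\mathcal U_{n/k}$ for all $\epsilon$. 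The chain rule at $\epsilon=0$ yields
\[
\delta A=\tilde P_1(W)+\tilde A'(W)\,R,\qquad \delta B=\tilde Q_1(W),
\]
which, after identifying $P_1$ and $Q_1$ with the first-order variations of $A$ and $B$, is precisely the stated form.

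For the reverse inclusion I would match dimensions. Passing from $(A,B)$ to $(a,b)=(A',B')$ kills the constant terms of $\tilde A$ and $\tilde B$, so the effective parameter space for $\Phi$ has dimension $2(n+1)/k+k$; the map $\Phi$ is invariant under the two-dimensional group of affine reparametrizations $w\mapsto\alpha w+\beta$ (simultaneously replacing $(\tilde A(w),\tilde B(w),W)$ by $(\tilde A(\alpha w+\beta),\tilde B(\alpha w+\beta),(W-\beta)/\alpha)$), so $\dim\mathcal U_{n/k}=2(n+1)/k+k-2$. Inside the claimed vector space, the infinitesimal version of this symmetry, $(\tilde P_1,R)\mapsto(\tilde P_1+(sw+t)\tilde A'(w),\,R-sW-t)$, lies in the kernel of the assignment $(\tilde P_1,\tilde Q_1,R)\mapsto(P_1,Q_1)$, and a direct verification (using that $\tilde Q_1(W)$ determines $\tilde Q_1$ since $W$ takes enough values, and using that $(\tilde P_1-\tilde P_1')(W)=\tilde A'(W)(R'-R)$ forces $\tilde P_1-\tilde P_1'$ to be divisible by $\tilde A'$ with affine quotient) shows these are the only redundancies. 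Hence the claimed space has effective dimension $2(n+1)/k+k-2$ as well; since it contains the image of $d\Phi$ and both have the same dimension, they coincide and equal the Zariski tangent space at the smooth point $(a,0)$.

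The main obstacle is establishing that $(a,0)$ is indeed a smooth point of $\mathcal U_{n/k}$, equivalently that the compositional factorization $A=\tilde A\circ W$ of a generic $a$ is unique up to the two-dimensional affine reparametrization group. This is a Lüroth-type uniqueness statement in the spirit of the argument used to prove Theorem \ref{ccth}: for $a$ chosen so that $A$ admits no nontrivial further compositional factorization compatible with the prescribed degrees, the kernel of $d\Phi$ is precisely the two-parameter family above, no more. Granting this genericity, the tangent-arc analysis is exhaustive and the proposition follows. As a consistency check, the case $k=n+1$, $(n+1)/k=1$ makes $\tilde A'$ and $\tilde B'$ constants, so the claimed space collapses to pairs $(p_1,q_1)$ with $q_1$ a scalar multiple of $a$ and $\int_0^1 p_1=0$, recovering Proposition \ref{tangentc1}.
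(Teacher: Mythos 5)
Your forward computation is exactly the paper's proof: the paper's entire argument for Proposition \ref{cone} is the one line ``consider the first order approximation in $\varepsilon$ of the general deformation $\omega_1^\varepsilon = d[(\tilde A+\varepsilon\tilde P)\circ(W+\varepsilon R)] + \varepsilon y\, d[\tilde Q\circ(W+\varepsilon R)]$'', which is precisely your chain-rule expansion $\delta A=\tilde P_1(W)+\tilde A'(W)R$, $\delta B=\tilde Q_1(W)$. So for the inclusion ``claimed space $\subseteq T_{(a,0)}\mathcal U_{n/k}$'' you and the paper coincide. Your dimension bookkeeping is also sound: $\dim\mathcal U_{n/k}=2(n+1)/k+k-2$, and the two redundancies $(\tilde P_1,R)\mapsto(\tilde P_1+(sw+t)\tilde A'(w),\,R-sW-t)$ are indeed the whole kernel for generic $\tilde A$ (if $\tilde A'$ has simple roots $w_i$, then $\tilde P_1(W)+R\tilde A'(W)=c$ forces $\tilde P_1(w_i)=c$ for all $i$, hence $\tilde A'\mid \tilde P_1-c$ with affine quotient), so the claimed space has dimension $2(n+1)/k+k-2$ as well. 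The consistency check against the $k=n+1$ case is correct (the relevant earlier statement is the unlabelled proposition on $T_{(a,0)}\mathcal U_{n/n+1}$, not Proposition \ref{tangentc1}, which concerns the tangent cone of $\mathcal C_n$).

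The gap is in the reverse inclusion, and it is where you locate it, but your proposed fix is not sufficient as stated. To conclude $T_{(a,0)}\mathcal U_{n/k}\subseteq{}$claimed space from ``contained, and of equal dimension'' you need $\dim T_{(a,0)}\mathcal U_{n/k}=\dim\mathcal U_{n/k}$, i.e.\ smoothness of $\mathcal U_{n/k}$ at $(a,0)$. Uniqueness of the factorization $A=\tilde A\circ W$ up to the affine group controls only the fiber of the parametrization $\Phi$ over $(a,0)$, and a unique fiber together with maximal rank of $d\Phi$ along it does not preclude a larger Zariski tangent space of the image closure: for $t\mapsto(t^2-1,t^3-t)$ the differential has full rank at $t=1$, yet the image is singular at the origin because another branch arrives there; one must also exclude contributions from other sheets and from the boundary of the parametrizing family. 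Moreover $(a,0)$ lies on the proper subvariety $\{b=0\}$ of $\mathcal U_{n/k}$, so it is not a generic point and generic smoothness does not apply directly. To be fair, the paper offers no argument whatsoever for this direction (it declares the proof ``straightforward''), so your proposal is more complete than the source; but the smoothness of $\mathcal U_{n/k}$ at $(a,0)$ is the genuine mathematical content of the ``$\subseteq$'' half and still needs a proof (e.g.\ by exhibiting $\mathcal U_{n/k}$ near $(a,0)$ as the image of an immersion of the quotient by the affine group, or by directly bounding the Zariski tangent space through the defining equations).
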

The proof is straightforward, it suffices to consider the first order approximation  in $\varepsilon$ of the general deformation
$$
\omega_1^\varepsilon = d[(\tilde{A} + \varepsilon \tilde P) \circ (W+ \varepsilon R)(x)] + \varepsilon y d [\tilde Q \circ (W+ \varepsilon R) (x) ]
$$
of
$
\omega_1^0= a dx .
$

Next, we study the tangent cone $TC_{(a,0)} \mathcal C_n$. We need to compare the  affine varieties $T_{(a,0) }\mathcal U_{n/k} \subset TC_{(a,0)} \mathcal C_n$.
\begin{proposition}
In a sufficiently small neighbourhood of every general point $(p,q)\in T_{(a,0) }\mathcal U_{n/k}$ the tangent cones $ TC_{(a,0)} \mathcal C_n$ and  $T_{(a,0) }\mathcal U_{n/k}$ coincide.
\end{proposition}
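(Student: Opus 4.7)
The plan is to verify the two set-theoretic inclusions at a general point of $T_{(a,0)}\mathcal U_{n/k}$, extending the Movasati–Zare approach used above for the special case $k=n$. The easy direction $T_{(a,0)}\mathcal U_{n/k}\subseteq TC_{(a,0)}\mathcal C_n$ is immediate: $\mathcal U_{n/k}\subseteq \mathcal C_n$, and $\mathcal U_{n/k}$ is smooth at a general point (being the image of a Zariski open set of triples $(\tilde A,\tilde B,W)$), so its tangent space coincides with its tangent cone there and embeds into $TC_{(a,0)}\mathcal C_n$. The task is therefore to prove the reverse inclusion locally around a general $(p,q)\in T_{(a,0)}\mathcal U_{n/k}$.

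Given any $(p,q)\in TC_{(a,0)}\mathcal C_n$ in such a neighbourhood, the definition of the tangent cone furnishes an analytic arc
\[ \varepsilon\mapsto (a+\varepsilon^m p+\varepsilon^{m+1} p_2+\cdots,\; \varepsilon^m q+\varepsilon^{m+1} q_2+\cdots) \]
entirely contained in $\mathcal C_n$. Since every equation on this arc has a center on $[0,1]$, all Melnikov functions $M_j$ of the deformation (\ref{abelfoliationeps}) must vanish identically in $h$. Extracting $M_1\equiv 0$ as in section \ref{section3} yields $\int_0^1 p\,dx=0$ together with $\int_0^1 q(x)/(h-A(x))\,dx\equiv 0$. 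Under the standing hypothesis $a(0),a(1)\neq 0$ and the fixed factorization $A=\tilde A\circ W$ through a degree-$k$ polynomial $W$, Theorem \ref{ccth} forces $Q=\int q=\tilde Q\circ W$ with $W(0)=W(1)$, exactly matching the condition on $Q$ in Proposition \ref{cone}.

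The remaining condition on $P=\int p$ must be extracted from $M_2\equiv 0$. Under $M_1=0$, Fran\c{c}oise's recursion gives
\[ M_2(h) = \int_{\{H=h\}}\omega_1\omega_1' + \int_{\{H=h\}}\omega_2, \]
with $\omega_1=p(x)\,dx+yq(x)\,dx$, $\omega_1'=-y^2q(x)\,dx$, $\omega_2=p_2(x)\,dx+yq_2(x)\,dx$. Developing these iterated integrals along the cycles $\gamma_i(h)-\gamma_j(h)$ of the singular genus-zero curve $\Gamma_h'$ and using $Q=\tilde Q\circ W$, the contribution of $(p_2,q_2)$ to $M_2$ can be adjusted to vanish by choosing $(p_2,q_2)$ in the PCC class (Christopher again), while the residual dependence on $P$ organises itself as a zero-dimensional Abelian integral of the type $P(x_i(h))-P(x_j(h))$ weighted by the $a$-data. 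Its identical vanishing, by the same L\"uroth argument that closes the proof of Theorem \ref{ccth}, is equivalent to the decomposition $P=\tilde P\circ W + R\cdot(\tilde A'\circ W)$ with $R(0)=R(1)$, which is precisely the remaining clause of Proposition \ref{cone}.

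The main obstacle is the second-order bookkeeping in $M_2$: isolating the genuine constraint on $P$ from the free higher-order data $(p_2,q_2,p_3,q_3,\dots)$, and checking that the residual zero-dimensional Abelian integral condition on $P$ reduces, via the L\"uroth step, to exactly the $\tilde P\circ W + R\cdot(\tilde A'\circ W)$ form. Once this is established, every $(p,q)\in TC_{(a,0)}\mathcal C_n$ close enough to a general point of $T_{(a,0)}\mathcal U_{n/k}$ is forced to lie in $T_{(a,0)}\mathcal U_{n/k}$, which together with the trivial inclusion yields the desired local coincidence of the two tangent cones.
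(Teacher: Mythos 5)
Your strategy is the same as the paper's: the easy inclusion $T_{(a,0)}\mathcal U_{n/k}\subseteq TC_{(a,0)}\mathcal C_n$, then $M_1\equiv 0$ plus Theorem \ref{ccth} to force $Q=\tilde Q\circ W$, then $M_2\equiv 0$ to constrain $P$. The first two steps are fine. The problem is that the decisive third step is exactly the one you defer as ``the main obstacle,'' and the way you sketch it would not produce the right answer. The paper evaluates $M_2$ explicitly: the shuffle relation together with the vanishing of $\int_{\{H=h\}} yq\,dx$ (a pull-back, since $Q=\tilde Q\circ W$) reduces the iterated integral $\int\omega_1\omega_1'$ to $\int_{\{H=h\}} y^2 qP\,dx$, while $\int\omega_2=-\int_{\{H=h\}}y^2Q_2a\,dx$, giving the single moment-type integral
\begin{equation*}
M_2(h)=\int_0^1\frac{q(x)P(x)-Q_2(x)a(x)}{(h-A(x))^2}\,dx .
\end{equation*}
Here $Q_2=\int q_2$ is \emph{not} something you get to ``adjust to vanish'': for the inclusion $TC_{(a,0)}\mathcal C_n\subseteq T_{(a,0)}\mathcal U_{n/k}$ the arc is given, $Q_2$ is whatever it is, and the whole point is that it enters coupled to $a$. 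Christopher's theorem applied to this integral gives $qP-Q_2a=R_1(W)W'$, i.e.\ $\tilde Q'(W)P-Q_2\tilde A'(W)=R_1(W)$ after dividing by $W'$, and it is the Bezout identity in $\C[W]$ (using that $\tilde Q'$ and $\tilde A'$ are coprime at a general point) that forces $\tilde A'(W)$ to divide $P-R_2(W)$ and hence yields $P=R_2(W)+R\cdot(\tilde A'\circ W)$.

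Your proposed residual condition, a zero-dimensional integral ``of the type $P(x_i(h))-P(x_j(h))$'' killed by the L\"uroth argument, would give only $P=\tilde P\circ W$. That is a strictly smaller space than the tangent space described in Proposition \ref{cone}, so this reduction cannot be correct: the summand $R\cdot(\tilde A'\circ W)$ exists precisely because of the unknown second-order datum $Q_2$ multiplying $a$, and any argument that eliminates $(p_2,q_2)$ before extracting the constraint on $P$ will lose it. So the gap is not mere bookkeeping; it is the identification of $M_2$ as the moment integral above and the subsequent divisibility argument, without which the proposition is not proved.
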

The above Proposition shows that there is no irreducible component of $ TC_{(a,0)} \mathcal C_n$ which contains an irreducible component of $T_{(a,0) }\mathcal U_{n/k}$ of strictly smaller dimension.
This would imply Theorem \ref{pullback}.

The first Melnikov function, as in the case $k=n$, is $M_1= \int_0^1 p_1dx + yq_1 dy$. By Christopher's theorem $M_1=0$ implies that $q_1$ satisfies the composition condition
$$
Q_1(x)= \tilde Q_1 (W(x)) .
$$
Additional obstructions on the form of $p_1$ will be found by inspecting the second Melnikov function $M_2$.
Under the condition that $M_1=0$ we find  \cite[formula (2.8)]{gavr05}
$$
M_2 = \int_0^1 \omega_1 \omega'_1 + \int_0^1 \omega_2
$$
where the derivative $'$ is with respect to the parameter $h$. The identity $h= A(x)+ \frac1y$ shows that $y'=-y^2$ and $\omega_1'= - y^2 dx$, it is clearly a covariant derivative in a cohomology bundle (although we do not need this interpretation here). Therefore, for the iterated integral of leght two we find
\begin{align*}
 \int_0^1 \omega \omega'  & = - \int_{\{H=h\}} (p_1 dx+q_1 y dx)( y^2 q_1 dx) \\
 & = - \int_{\{H=h\}} (p_1 dx)( y^2 q_1 dx) \\
 &= \int_{\{H=h\}} y^2 q_1 P_1 dx
\end{align*}
where $P_1$ is a primitive of $p_1$. Indeed, $M_1=0$ implies the composition condition for $Q_1=\int q_1$ and $A$, that is to say the integral $\int_{\{H=h\}} y q_1 dx$ vanishes as a pull back. The same then holds true for its derivative $\int_{\{H=h\}} y^2 q_1 dx$ as well for the iterated integral $\int_{\{H=h\}} (y q_1 dx))( y^2 q_1 dx) $ . Further, the shuffle relation for iterated integrals
\begin{align*}
 \int_{\{H=h\}} (p_1 dx)( y^2 q_1 dx) &+  \int_{\{H=h\}} ( y^2 q_1 dx) (p_1 dx) \\ &= \int_{\{H=h\}} p_1 dx \int_{\{H=h\}} y^2 q_1 dx = 0
\end{align*}
Further, for $\int_0^1 \omega_2$ we find
\begin{align*}
 \int_0^1 \omega_2 =  \int_0^1 (p_2+  y q_2)   dx &= \int_0^1 \frac{d Q_2}{h-A(x)} \\
 & = - \int_0^1 \frac{Q_2 dA}{(h-A)^2}  + \frac{Q_2}{h-A} |_0^1 \\
 &  = -\int_0^1 y^2 Q_2 a dx .
\end{align*}
so that under the condition $M_1=0$ implies
$$
M_2(h)=  \int_{\{H=h\}} y^2 q_1 P_1 dx -  y^2 Q_2 a dx =  \int_{0}^1 \frac{q_1(x) P_1(x) - Q_2(x) a(x)}{(h-A(x))^2} dx .
$$
We apply Christopher's theorem to $M_2$ and conclude that the primitive of the polynomial $q_1(x) P_1(x) - Q_2(x) a(x)$ is a composite polynomial, it can be expressed as a polynomial function in $W(x)$, and therefore
$$
q_1(x) P_1(x) - Q_2(x) a(x) = P(W(x)) W'(x) 
$$
or equivalently
$$
\tilde{Q}_1'(W(x)) P_1(x) - Q_2(x) \tilde{A}'(W(x)) = R_1(W(x))
$$
for certain polynomial $R_1$. Assuming that $\tilde{Q}_1'$ and $\tilde{A}'$ are mutually prime, there exist polynomials $R_2, R_3$ such that
$$
\tilde{Q}_1'(W) R_2(W) - \tilde{A}'(W) R_3(W) = R_1(W)
$$
so
$$
\tilde{Q}_1'(W(x))  (P_1(x) -R_2(W(x)))- (Q_2(x)- R_3(W(x)))  \tilde{A}'(W(x)) = 0 .
$$
This implies finally that $ \tilde{A}'(W(x))$ divides $P_1(x) -R_2(W(x))$ and
$$
P_1(x) = R_2(W(x)) + R(x) \tilde{A}'(W(x)) .
$$
Proposition \ref{cone}, and hence Theorem \ref{pullback} is proved.
 \endproof

\subsection{An example : the central set of plane quadratic vector fields}
\label{section42}
Let $ \mathcal A_n$  be here  the set of all polynomial vector fields of degree at most $n$.
The only (non-trivial) case in which the center set $\mathcal C_n \subset \mathcal A_n$  is completely known is the quadratic one, $n=2$.
For comprehensive description and historical comments concerning the center-focus problem in the quadratic case see
 Zoladek \cite{zola94}.
To the plane quadratic vector field (\ref{system})
we associate a foliation $\mathcal F_\omega = \{\omega = 0\} $ on $\C^2$, defined by  the polynomial one-form
$$
\omega = P(x,y) dy - Q(x,y) dx   .
$$
The leaves of the foliation  are the orbits of the plane vector field (\ref{system}), and the restriction of the 
one-form $\omega$ on the leaves of $\mathcal F_\omega$  vanishes identically. 

In this section we assume that the polynomials $P,Q$ are of degree at most two, and
 the system has a center. 
As the foliation is over $\C$ we must be more careful in the definition. We shall say that a singular point is a center, if the point is non-generate, and has a local holomorphic first integral with a Morse critical point. 
Thus, i a neighbourhood of such a point, and up to a complex affine change of the variables, the system can be written in the form
\begin{align*}
\dot{x} &=x + P_2(x,y), \;
\dot{y}= - y + Q_2(x,y)
\end{align*}
for some homogeneous polynomials $P_2, Q_2$.  The following classical result is   implicit in Zoladek \cite[Theorem 1]{zola94},  and explicit in Lins Neto \cite[Theorem 1.1]{lins14}.
\begin{theorem}
The center set $\mathcal C_2$ of plane polynomial quadratic systems with a Morse center has four irreducible components. 
\end{theorem}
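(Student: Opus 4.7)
The plan is to follow the Bautin--Dulac--Zoladek scheme, reorganised along the lines of Movasati's tangent-cone approach used in Section \ref{section41}.

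First, I would fix an affine normal form for a quadratic system with a Morse critical point at the origin and eigenvalues $\pm i$:
\begin{equation*}
\dot x = -y - bx^2 - Cxy - dy^2, \qquad \dot y = x + ax^2 + Axy - ay^2,
\end{equation*}
so that $\mathcal A_2$ becomes affine space $\C^5$ with coordinates $(a,A,b,C,d)$ (plus the trivial action of affine equivalence, which I would factor out at the end). The center set $\mathcal C_2$ is then cut out by the Bautin ideal, which is known to be generated by the focal values $v_3,v_5,v_7,v_9$. I would take this finite generation as the starting point rather than reprove it.

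Second, I would exhibit four irreducible subvarieties $\mathcal Q^H,\mathcal Q^R,\mathcal Q^{LV}_3,\mathcal Q_4$ of $\mathcal C_2$, each defined by a simple set of polynomial identities in $(a,A,b,C,d)$ (respectively: the Hamiltonian condition $A+2b=2a+C=0$; the reversibility condition $a=C=0$; the three-invariant-line Lotka--Volterra locus; and the codimension-four Darboux family of Zoladek). For each of them I would write down an explicit Darboux first integral of the form $\prod F_i^{\lambda_i}$ or $\exp(G/H)\prod F_i^{\lambda_i}$, which shows at once that the locus lies in $\mathcal C_2$. Irreducibility of each of the four is immediate because each admits a rational parametrization by a Zariski-open subset of an affine space of the expected dimension.

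Third, I would show that these four candidates are actually \emph{irreducible components}, not just irreducible subvarieties, by the tangent-space/tangent-cone criterion used in the proof of Theorem \ref{pullback}. Picking a sufficiently generic point $x$ on each $\mathcal Q^\bullet$, I would compute $T_x\mathcal Q^\bullet$ by differentiating the defining identities, and $TC_x\mathcal C_2$ via the first Melnikov function of a one-parameter deformation of the Hamiltonian integral $H$ associated to $x$ (if necessary, also via $M_2$ to cut down dimensions, as in Proposition \ref{cone}). In every case the vanishing of $M_1$ reduces, by Christopher-type composition arguments applied along the periodic leaves $\{H=h\}$, to linear constraints on the deformation that cut out exactly $T_x\mathcal Q^\bullet$.

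Finally, to prove \emph{exhaustion}, i.e.\ that $\mathcal C_2=\mathcal Q^H\cup\mathcal Q^R\cup\mathcal Q^{LV}_3\cup\mathcal Q_4$, I would solve the polynomial system $v_3=v_5=v_7=v_9=0$ in the Bautin coordinates $(a,A,b,C,d)$ by a case analysis on which of the resultants between $v_3$ and $v_5$ vanish; this is the classical Kapteyn--Dulac--Zoladek calculation, and the four irreducible branches of the resulting zero-set match the four families above. The main obstacle is precisely this last step: although the individual components are easy to write down and each is easily seen to lie in $\mathcal C_2$, showing that nothing else appears requires the genuine algebraic factorisation of the Bautin variety and this is the computational core of Zoladek's paper \cite{zola94} that I would quote rather than redo.
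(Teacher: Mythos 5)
Your proposal is correct in outline but follows a genuinely different route from the paper. The paper stays entirely inside the Darboux/logarithmic-foliation picture: the four components are exhibited as $\overline{\mathcal L(3)}$, $\overline{\mathcal L(1,2)}$, $\overline{\mathcal L(1,1,1)}$ and $\mathcal Q_4=\overline{\mathcal L(2,3)}\cap\mathcal A_2$, irreducibility is immediate from the parametrization by the polynomials $f_i$ and exponents $\lambda_i$, and exhaustion is obtained by running through the ten cases of Dulac's classification \cite{dul08} of complex Morse centers as reproduced in \cite{cerlin96}, checking that every case lands in one of the four closures. You instead work in the Bautin normal form on a $5$-dimensional slice, describe the components by Zo\l{}adek's identities on the coefficients, and quote the factorisation of the Bautin variety from \cite{zola94}; under the standard dictionary $Q_3^H\leftrightarrow\mathcal L(3)$, $Q_3^R\leftrightarrow\mathcal L(1,2)$, $Q_3^{LV}\leftrightarrow\mathcal L(1,1,1)$, $Q_4\leftrightarrow\mathcal Q_4$ these are the same four families, so the two arguments prove the same statement and both outsource the hard exhaustion step to a classical computation. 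Three remarks on your version. First, Bautin's theorem gives the ideal with \emph{three} generators $v_3,v_5,v_7$; adding $v_9$ is harmless but unnecessary. Second, since you do establish exhaustion, your entire third step (the $M_1$, $M_2$ tangent-cone analysis in the style of Theorem \ref{pullback} and Proposition \ref{cone}) is logically redundant: once $\mathcal C_2$ is written as a union of four irreducible closed sets, none contained in another (a dimension count plus the observation that $\mathcal Q_4$ has codimension $4$ while the others have codimension $3$ settles this), uniqueness of the irreducible decomposition does the rest. The Melnikov machinery is what one needs precisely when exhaustion is out of reach, as for the Abel equation; here it only adds work, and carrying it out rigorously for the reversible and Lotka--Volterra strata is not as routine as your sketch suggests. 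Third, passing from your slice $\C^5$ back to the full $12$-dimensional space $\mathcal A_2$ requires the (easy but worth stating) remark that saturating an irreducible set under the connected group $\textit{Aff}_2\times\C^*$ preserves irreducibility, so the component count is unaffected. What the paper's route buys is that irreducibility and membership in $\mathcal C_2$ are visible at a glance from the Darboux integral, and the argument lives naturally over $\C$ where ``Morse center'' is defined; what your route buys is an explicit coordinate description by polynomial equations and a direct link to the Bautin ideal.
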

The result follows  essentially from the Dulac's classification \cite{dul08} of such Morse centers in a complex domain.
We sketch a proof. 
\proof
The foliation  $\mathcal F_\omega$, respectively the vector field (\ref{system}), is said to be logarithmic, if  
\begin{equation}
\label{log}
P(x,y) dy - Q(x,y) dx = f_1 \dots f_k \sum_{i=1}^k \lambda_i \frac{df_i}{f_i}, f_i \in K[x,y], \lambda_i \in K 
\end{equation}
for suitable polynomials $f_i$ and exponents $\lambda_i$.
As
$$
\sum_{i=1}^k \lambda_i \frac{df_i}{f_i} = d \log \Pi_{i=1}^k  f_i^{\lambda_i}
$$
then the logarithmic foliation  $\mathcal F_\omega$ has a first integral of Darboux type
$$
 \Pi_{i=1}^k  f_i^{\lambda_i} .
$$
Let $\mathcal L (d_1,d_2, \dots,d_k)$ denotes   the set of such logarithmic foliations (or plane vector fields) with
$$
\deg f_1 \leq d_1, \deg f_2 \leq d_2 , \dots, \deg f_k \leq d_k .
$$
For generic polynomials $f_i$ of degree $d_i$ the degree of the associated vector field is $\sum d_i-1$.
Therefore $\mathcal L (d_1,d_2, \dots,d_k)$  is quadratic, provided that
 $d_1=3$ or $d_1=1, d_2=2$ or $d_1=d_2=d_3=1$. This defines three large irreducible sets  of quadratic systems with a Morse center, 
$\mathcal L(3), \mathcal L(1,2), \mathcal L(1,1)$ respectively. The irreducibility of these algebraic sets follows from the fact that they are parameterised by the coefficients of the polynomials $f_i$ and by the exponents $\lambda_i$. We have one more exceptional irreducible set of systems with a Morse center which is
$$
\mathcal Q_4 = \mathcal L(2,3) \cup \mathcal A_2 .
$$
Here $\mathcal L(2,3)$ is the set of polynomial foliations as above, with a first integral $f_2^3/f_3^2$ where $\deg f_2 = 2, \deg f_3 = 3$. Generically such a foliation is of degree four, but it happens that its intersection $\mathcal Q_4$ with the space $\mathcal A_2$ of quadratic foliations is non empty and it is an irreducible algebraic set. The notation $\mathcal Q_4$ is introduced by Zoladek \cite{zola94}, the index $4$ indicates the co-dimension of the set in $\mathcal A_2$. To complete the proof we carefully investigate all cases of the Diulac's classification as reproduced in \cite[Theorem  7]{cerlin96}. It is seen there that in all ten cases found by Dulac, the corresponding quadratic system with a Morse center is either in one of the four cases above, or in their closures
$$
\overline{\mathcal{L}(3)}, \overline{\mathcal{L}(2,1)}, \overline{\mathcal{L}(1,1,1)}, \overline{ \mathcal{L}(3,2)}\cap \mathcal A_2 .
$$
To illustrate the last claim, consider a
quadratic foliation defined by the closed one form
$$
\omega_0= p_1p_2 \cdot \eta_0,\;  \eta_0 = \lambda_1 \frac{dp_1}{p_1}+  \lambda_2 \frac{dp_2}{p_2} + d q
$$
where $p_1,p_2,q$ are linear functions. The system has a   first integral   $p_1^{\lambda_1}p_2^{\lambda_2} e^q$,
and hence generically a Morse center, although it 
does not belong to any $\mathcal L (d_1,d_2, \dots,d_k)$.

However, the one-parameter family of one-forms
$$
\omega_\varepsilon = p_1p_2 (1+ \varepsilon q)( \lambda_1 \frac{dp_1}{p_1}+  \lambda_2 \frac{dp_2}{p_2} +\frac{1}{\varepsilon} \frac {d(1+ \varepsilon q)}{1+ \varepsilon q}) \in \mathcal{L}(1,1,1) 
$$
tends to $\omega_0$ , when the parameter $\varepsilon$ tends to $0$.
This shows that $\omega_0 \in \overline{\mathcal{L}(1,1,1)}$. 
The  missing details  can be found in  \cite[Appendix] {gavr16}. 
\endproof

The exceptional set $\mathcal Q_4$ might look not quite explicit, we investigate it in details bellow.

The space $\mathcal A_n$ of polynomial vector fields of degree  at most $n$  are identified to a vector space of dimension $(n+1)(n+2)$. On $\mathcal A_n$ acts the affine group 
$\textit{Aff}_2$
of affine transformations of $K^2$ (as usual $K=\R$ or $K=\C$), as well the multiplicative groupe $K^*$ corresponding to "change of time", $\dim \textit{Aff}_2 \times K^* = 7$. Therefore the minimal 
dimension of a component of the central set $\mathcal C_n$ is $7$. Such components, if exist, will be in a sense exceptional. 

In the quadratic case $n=2$ the dimension of the for components of $\C^2$ are easily found. For instance, in the case $\mathcal L(1,1,1)\subset \mathcal A_2$, and up to an affine changes of variables and time, one may suppose that the first integral is in the form $xy^\lambda (1-x-y)^\mu$. Therefore the dimension of $\mathcal{L}(1,1,1)$  is $2+7=9$ and the codimension is $3=12-9$. We find similarly that 
$\dim \mathcal L(2,1) = \dim \mathcal L(3) = 9$.

We describe now the last component $\mathcal Q_4$.
 Let $[x:y:z] $ be homogeneous coordinates in $\P^2$
\begin{align}
\label{pol2}
P_2(x,y,z)&= a_2(x,y) + a_1(x,y) z + a_0(x,y) z^2\\
\label{pol3}
P_3(x,y,z)& = b_3(x,y) +b_2(x,y) z + b_1(x,y)z^2 + b_0(x,y) z^3
\end{align}
be homogeneous polynomials in $x,y,z$ of degree $2$ and $3$. The function $$H=P_2^3/P_3^2$$ is therefore rational on $\P^2$ and induces a  foliation on $\P^2$
\begin{equation}
\label{p2}
3 P_3(x,y,z) d P_2(x,y,z)
-2 P_2(x,y,z) d P_3(x,y,z) =0 .
\end{equation}
 The corresponding affine foliation  on  the chart $\C^2$ defined by $z=1$
\begin{equation}
\label{aff2}
3 P_3(x,y,1) d P_2(x,y,1)
-2 P_2(x,y,1) d P_3(x,y,1) =0
\end{equation}
is of degree $ 4$. We may obtain a plane polynomial foliation of degree $ 2$ by imposing the following additional conditions. 

Suppose first, that the infinite line $\{z=0\}$ is invariant, that is to say (up to affine change) 
\begin{equation}
H(x:y:1)= \frac{a_2(x,y)^3}{  b_3(x,y)^2}  = 1 .
\end{equation}
This condition can be written as
$$
P_2(x,y,z)^3=P_3(x,y,z)^2 + O(z) .
$$
The foliation  (\ref{p2}) takes the form
$$
z[P(x,y,z)dx + Q(x,y,z)dy] + R(x,y,z) dz = 0
$$
where $\deg P, \deg Q \leq 3$, so (\ref{aff2}) is of degree $ 3$. If we further suppose that $z$ divides the homogeneous one form $3P_3dP_2-2P_2dP_3$ then (\ref{p2}) takes the form
$$
z^2[P(x,y,z)dx + Q(x,y,z)dy] + zR(x,y,z) dz = 0
$$
where $\deg P, \deg Q \leq 2$, so (\ref{aff2}) is a plane quadratic foliation. The condition that $z^2$ divides  $2P_3dP_2-3P_2dP_3$ can be written as
$$
P_2(x,y,z)^3=P_3(x,y,z)^2 + O(z^2)
$$
or equivalently
\begin{align}
\label{eqn1}
a_2(x,y)^3 &=   b_3(x,y)^2 \\
\label{eqn2}
3 a_2(x,y)^2 a_1(x,y) &= 2  b_3(x,y)  b_2(x,y) .
\end{align}
These polynomial relations can be further simplified by affine changes of the variables $x,y$. 
First,  (\ref{eqn1}) implies that $a_2$ is a square of a linear function in $x,y$ which we may supose equal to $x$, that is to say
$$
a_2(x,y)= x^2, \; b_3(x,y) = x^3 .
$$
The second condition (\ref{eqn2}) becomes $3x a_1= 2 b_2$ where we may put $a_1=2 y$, and hence
$$
a_1(x,y) = 2y,\;  b_2(x,y) = 3 xy .
$$
It is seen that the polynomial $P_3(x,y,1)$ has a real critical point  which we can put at the origin, so we shall also suppose that $b_1=0$. Using finally a "change of time" (the action of $K^*$) we assume
that $b_0=1$ while $a_0= \alpha \in K$ is a free parameter (modulus). The first integral takes therefore the form
\begin{equation}
\label{aps2}
H_\alpha(x,y) = \frac{ (x^2 + 2 y + \alpha)^3}{(x^3 + 3xy + 1)^2}
\end{equation}
with induced quadratic foliation
\begin{equation}
\label{aps3}
(- \alpha x^{2}  - 2 y^{2} -  \alpha y  + x) dx + (x y -  \alpha x + 1)dy .
\end{equation}
This is the exceptional co-dimension four component of  $Q_4$.   

The reader may check that the corresponding vector field
$$
x'= x y -  \alpha x + 1, \; y' =  \alpha x^{2}  +2 y^{2} +  \alpha y  - x
$$
has a Morse center at $x=1/\alpha, y=0$ which is moreover a usual real center 
 for $\alpha \in (\-1,0)$. The above computation is suggested by  \cite{lins14} where, however, the modulus $\alpha$ is wrongly fixed equal to $\alpha=\infty$). 
 The foliation on $\P^2$ corresponding to
 $$
 H_\infty(x,y) = \frac{ (x^2 + 2 y + 1)^3}{(x^3 + 3xy)^2}
 $$
 has two invariant lines $\{x=0\}$ and $\{z=0\}$, in contrast to the general foliation defined by $dH_\alpha(x,y) = 0$ which has only one invariant line $\{z=0\}$.
 We resume the above as follows
 \begin{proposition}
Every polynomial vector field having a rational first integral of the form
  $$H(x,y)=  \frac{(   a_0(x,y) + a_1(x,y)  + a_2(x,y) )^3}{( b_0(x,y)+  b_1(x,y) + b_2(x,y) + b_3(x,y))^2}
  $$
where the homogeneous polynomials $a_i, b_j$ of degrees $0\leq i \leq 2$, $0\leq j \leq 3$ are subject to the relations
\begin{align*}
a_2(x,y)^3 &=   b_3(x,y)^2 \\
3 a_2(x,y)^2 a_1(x,y) &= 2  b_3(x,y)  b_2(x,y) 
\end{align*}
is of degree two. The set of such quadratic vector fields form the irreducible component $\mathcal Q_4$ of the center set $\mathcal C_2$.
Up to an affine change of the variables $x,y$ the polynomial $H$ can be assumed in the form 
$H(x,y) = \frac{ (x^2 + 2 y + \alpha)^3}{(x^3 + 3xy + 1)^2}$ where $\alpha$ is a parameter.
  \end{proposition}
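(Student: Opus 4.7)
The plan is to start from the Darboux first integral $H=P_2^3/P_3^2$ and chase the condition that the associated projective foliation (\ref{p2}) drops from degree four to degree two in the affine chart $z=1$. The projective foliation is cut out by the homogeneous one-form $\Omega = 3P_3\,dP_2 - 2P_2\,dP_3$. For the affine vector field on $\{z=1\}$ to have degree two, the line $\{z=0\}$ must be invariant and, more strongly, $z^2$ must divide $\Omega$ after discarding the radial factor. I would translate "$z^2$ divides $\Omega$'' into the two homogeneous identities on the pieces $a_i,b_j$ of $P_2,P_3$ by expanding $\Omega$ modulo $z^2$; this is a direct computation that reproduces the relations $a_2^3 = b_3^2$ and $3a_2^2 a_1 = 2 b_3 b_2$.

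Next I would bring the first integral to normal form by successively using the actions of $\mathrm{Aff}_2\times K^*$. From $a_2^3 = b_3^2$ the quadratic form $a_2$ is the square of a linear form, which by an affine change of $(x,y)$ can be taken to be $x$, forcing $a_2=x^2$ and $b_3=x^3$. Substituting into the second identity and using the remaining affine freedom (a linear change mixing $y$ with $x$, plus rescaling) normalises $a_1=2y$ and $b_2=3xy$. A translation of $y$ then kills the linear term $b_1$ of $P_3$, and a final time-rescaling fixes $b_0=1$; the only remaining constant is $a_0=\alpha$, which is the modulus. This yields the claimed form $H_\alpha = (x^2+2y+\alpha)^3/(x^3+3xy+1)^2$, whose level sets define the quadratic foliation (\ref{aps3}).

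Once the normal form is obtained I would conclude irreducibility and the identification with the component $\mathcal Q_4$ as follows. Conversely, any quadratic system with a rational first integral of the stated shape satisfies, by the same $z^2$-divisibility computation, the two algebraic constraints, and so lies in the family just parametrised. Hence the locus of such vector fields is the image of an irreducible parameter space (the coefficients of $f_2,f_3$ together with $\alpha$, modulo $\mathrm{Aff}_2\times K^*$), therefore irreducible. Since this locus is contained in the center set $\mathcal C_2$ and its dimension equals that of the exceptional component $\mathcal Q_4 = \overline{\mathcal{L}(2,3)}\cap \mathcal A_2$ identified in the preceding theorem, the two irreducible sets coincide.

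The only step that requires genuine care is the polynomial identification of "$z^2 \mid \Omega$'' with the two equations $a_2^3=b_3^2$ and $3a_2^2 a_1 = 2b_3 b_2$: one must check that invariance of $\{z=0\}$ (which is the $z^1$-divisibility condition) plus these two identities really suffice, i.e.\ that higher-order coefficients in $z$ impose no further constraints on $a_0,b_0,b_1$. This is a bookkeeping exercise in expanding $3P_3\,dP_2 - 2P_2\,dP_3$ by homogeneous degree in $z$, but it is where a sign or factor could easily be dropped, so I would carry it out symbolically. Everything afterwards is standard reduction under the seven-dimensional group $\mathrm{Aff}_2\times K^*$, with $\alpha$ surviving as the single essential modulus.
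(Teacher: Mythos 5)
Your proposal is correct and follows essentially the same route as the paper: imposing $z^2$-divisibility of $3P_3\,dP_2-2P_2\,dP_3$ to get the two relations $a_2^3=b_3^2$ and $3a_2^2a_1=2b_3b_2$, then normalising $a_2=x^2$, $b_3=x^3$, $a_1=2y$, $b_2=3xy$, $b_1=0$, $b_0=1$ under $\mathrm{Aff}_2\times K^*$ with $a_0=\alpha$ as the surviving modulus, and concluding irreducibility from the irreducible parametrisation. The bookkeeping step you flag is exactly the computation the paper carries out via the equivalent condition $P_2^3=P_3^2+O(z^2)$.
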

 
 We conclude this section with the following remarkable property of $\mathcal Q_4$.
 One may check that general rational function of the form $H(x,y)=P_2^3/P_3^2$, where $P_2,P_3$ are bi-variate polynomials of degree two and three,
 defines a pencil of genus four curves $\Gamma_t = \{(x,y):H(x,y)=t\}$ on $\C^2$. 
However, the special rational function $ H_\alpha$ (\ref{aps2}) defines an elliptic pencil, that is to say the level sets  $\Gamma_t = \{(x,y):H_\alpha(x,y)=t\}$ on $\C^2$
$$\{(x,y) \in \C^2 : H_\alpha(x,y)= const \}$$
are genus one curves, see 
 \cite{gail09}.

\subsection{An example : the central set of the polynomial Li\'enard equation}
\label{section4b}
Consider the following  polynomial Li\'enard equation
\begin{align}
\dot{x}  = y , \; \dot{y} = -q(x) - y p(x) \label{lie1}
\end{align}
in which the origin $(0,0)$ is an isolated singular point. Note that it is equivalent to the second order non-linear equation
$$
\ddot{x} + p(x) \dot{x} + q(x) = 0
$$
(a generalised Van der Pol oscillator). The description of the non-degenerate real analytic centers of (\ref{lie1}) is due to Cherkas \cite{cher72}, and in the polynomial case Christopher \cite{chri07} 
gave an  interpretation, which we formulate bellow.

The Li\'enard equation  (\ref{lie1}) induces a polynomial foliation
\begin{equation}
\label{lie2}
y dy + (q(x) + y p(x)) dx = 0 .
\end{equation}
Suppose that the following Polynomial Composition Condition (PCC) is satisfied

\emph{There exist polynomials $\tilde P, \tilde Q, W$ such that
\begin{equation} \tag{PCC} 
P= \tilde P\circ W, \;Q= \tilde Q \circ W 
\end{equation} 
where $P'(x)=p(x), Q'(x)=q(x))$  .} 

Then   the Li\'enard  foliation  takes the form
$$
ydy +   d\tilde Q(W) + y d\tilde P(W) = 0 .
$$
It is easy to see that if the origin $x=0$ is a Morse critical point of $W$, and if  $q'(0)>0$, then 
f the Li\'enard equation has a center at $(0,0)$.

The Theorem of Cherkas-Christopher can be formulated as follows
\begin{theorem}
The real polynomial Li\'enard equation (\ref{lie1}) has a non-degenerate real center if and only if $q'(0)>0$, and the polynomials $p, q$ satisfy the above Polynomial Composition Condition, where the real polynomial $W$ has a Morse critical point at the origin.
\end{theorem}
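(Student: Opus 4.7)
I would prove the two directions separately.

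Sufficiency is direct. Assume PCC holds with $W$ having a Morse critical point at the origin; after normalization, $W(0)=0$, $W'(0)=0$, $W''(0)\neq 0$. By the Morse lemma there is a local real analytic involution $\sigma$ near $0$ with $\sigma(0)=0$, $\sigma'(0)=-1$, and $W\circ\sigma=W$. Since $P=\tilde P\circ W$ and $Q=\tilde Q\circ W$, both $P$ and $Q$ are $\sigma$-invariant, and differentiating yields $p(\sigma(x))\sigma'(x)=p(x)$, $q(\sigma(x))\sigma'(x)=q(x)$. A short computation then shows that the Li\'enard one-form $\omega=y\,dy+(q(x)+yp(x))\,dx$ is preserved by the involution $(x,y)\mapsto(\sigma(x),y)$. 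Combined with the non-degeneracy at the origin (eigenvalues $\pm i\sqrt{q'(0)}$, using $q'(0)=\tilde Q'(0)W''(0)>0$), this reflection symmetry forces every sufficiently small orbit to close: starting at $(x_0,0)$ the orbit hits $\{y=0\}$ next at $(\sigma(x_0),0)$ by the symmetry and then returns to $(x_0,0)$, producing a real analytic center.

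For necessity, assume a non-degenerate real center at the origin. The linear part forces $p(0)=0$ and $q'(0)>0$. I would pass to Cherkas's form $\dot x = v-P(x)$, $\dot v = -q(x)$ via $v=y+P(x)$, exposing the conservative Hamiltonian $H_0=v^2/2+Q(x)$. Since $Q$ has a strict local minimum at $0$, the level sets $\{H_0=h\}$ for $h>0$ small are real analytic ovals meeting the $x$-axis at exactly two points $x_1(h)<0<x_2(h)$, which defines an analytic involution $\sigma$ on a neighbourhood of $0$ with $\sigma(0)=0$, $\sigma'(0)=-1$, and $Q\circ\sigma=Q$ by construction. The aim is to show that the center condition also forces $P\circ\sigma=P$; once this is known, L\"uroth's theorem applied to the subfield of $\mathbb{C}(x)$ fixed by $\sigma$ provides a polynomial generator $W$ satisfying $W\circ\sigma=W$, necessarily with a Morse critical point at $0$ (since $\sigma'(0)=-1$), and then $P=\tilde P\circ W$, $Q=\tilde Q\circ W$, which is PCC.

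The main obstacle is proving the identity $P\circ\sigma=P$. The natural approach is the Fran\c{c}oise-type recursion of Section \ref{section3}: treating $-P\,\partial_x$ as a perturbation of the Hamiltonian skeleton produces an infinite sequence of Melnikov-type conditions that must vanish identically in $h$. After integration by parts and the zero-dimensional Abelian integral trick from Christopher's proof of Theorem \ref{ccth}, the leading condition reduces to $P(x_2(h))-P(x_1(h))\equiv 0$. The technical difficulty is that the integrals involved here are hyperelliptic rather than rational (they involve $\sqrt{2(h-Q(x))}$), so Theorem \ref{ccth} does not apply verbatim and a Li\'enard-adapted analogue of the polynomial moment problem is required; this is precisely what Cherkas and Christopher establish. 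Once the identity $P\circ\sigma=P$ is in hand, the higher-order Melnikov conditions follow automatically from the sufficiency direction applied to the composition structure just obtained, closing the argument.
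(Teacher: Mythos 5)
Your sufficiency argument is correct: the involution $\sigma$ furnished by the Morse lemma for $W$ makes the Li\'enard foliation invariant under $(x,y)\mapsto(\sigma(x),y)$, and the reflection through the fixed line $\{x=0\}$ of that involution closes every small orbit around the non-degenerate singular point; this is the standard reversibility argument and is all the paper itself asserts for this direction. The necessity direction, however, has a genuine gap at exactly the step you flag as ``the main obstacle'', the identity $P\circ\sigma=P$, and the route you sketch for it does not work. First, there is no perturbation parameter in the problem: to speak of a Fran\c{c}oise/Melnikov hierarchy one must first rescale $(x,y)\mapsto\varepsilon(x,y)$, and then the center condition does not collapse to a single ``leading condition'' such as $\oint_{\{H_0=h\}}P\,dv\equiv 0$; it unfolds into an infinite inductive scheme in which the obstruction at order $\varepsilon^m$ is only defined once all lower-order obstructions vanish. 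Second, even the first of these integrals, $\int_{x_1(h)}^{x_2(h)}p(x)\sqrt{2(h-Q(x))}\,dx$, is not amenable to the zero-dimensional reduction of Theorem \ref{ccth}: the integrand has branch points, not simple poles, at the roots of $Q(x)-h$, so there are no residues producing the values $P(x_i(h))$, and no ``Li\'enard-adapted moment problem'' of the kind you invoke is what Cherkas actually proves. His Lemma 1 (\cite{cher72}, quoted in the paper) is of a completely different, topological nature: after the analytic normalization $Q(x(X))=X^2/2$ one compares the equation with its odd truncation, which has a center by symmetry, and a rotated--vector--field monotonicity argument forces every even Taylor coefficient of $\tfrac{d}{dX}P(x(X))$ to vanish; that evenness of $P(x(X))$ in $X$ is precisely $P\circ\sigma=P$. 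By writing ``this is precisely what Cherkas and Christopher establish'' you have deferred the entire analytic core of the theorem to the references, and via a reduction that is not available.

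If you want a self-contained argument in the perturbative spirit you propose, the paper's Lemma \ref{l2} (proved for the complex analogue, Theorem \ref{th2}) is the correct template: normalize $Q$ first, rescale by $\varepsilon$, observe that the odd truncation admits an exact first integral $H_\varepsilon$, and show inductively that if $a_{2k}$ is the first nonvanishing even coefficient then the holonomy displacement at order $\varepsilon^{2k}$ equals $a_{2k}h^{k+1}\iint_{X^2+y^2\leq 2}X^{2k}\,dX\,dy\neq 0$, a contradiction. Your L\"uroth endgame is then correct and coincides with the paper's proof of Theorem \ref{th2}, including the observations that the generator of the $\sigma$-invariant subfield of $\C(x)$ may be taken polynomial and that $\sigma'(0)=-1$ forces it to have a Morse critical point at the origin.
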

A self-contained proof can be found in  \cite{chri07} , it is based on the following simple  observation due to  Cherkas \cite[Lemma 1]{cher72}
\begin{lemma} 
The real analytic equation 
\begin{align}
\dot{x}  = y , \; \dot{y} = -x+ y \sum_{i=1}^\infty a_i x^i \label{cher}
\end{align}
has a center at the origin, if and only if $a_{2j}= 0, \forall j\geq 1$.
\end{lemma}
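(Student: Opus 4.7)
The plan is to prove the two implications separately: sufficiency by a reversibility symmetry, necessity by induction on the Poincar\'e--Liapunov focal values.

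For sufficiency, suppose $a_{2j}=0$ for every $j\geq 1$, so that $f(x)=\sum_{j\geq 0}a_{2j+1}x^{2j+1}$ is odd. A direct substitution, using $f(-x)=-f(x)$, shows that the involution $(x,y,t)\mapsto(-x,y,-t)$ leaves the system (\ref{cher}) invariant. The fixed set $\{x=0\}$ of the spatial part is transverse at the origin to the linear rotation $\dot x=y,\ \dot y=-x$, so any small orbit $\gamma(t)$ starting at $(0,y_0)$ returns to the $y$-axis at some point $(0,y_1)$ at a time $t_1>0$. By the reversibility, $\gamma(-t_1)=\sigma\gamma(t_1)=(0,y_1)$ as well, so the orbit visits $(0,y_1)$ at both times $-t_1$ and $t_1$; by uniqueness it is periodic of period $2t_1$. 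Every orbit near the origin is therefore closed, giving a center.

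For necessity, set $H=(x^2+y^2)/2$; then $\dot H=y^2 f(x)$ along trajectories. In polar coordinates $x=r\cos\theta,\ y=r\sin\theta$ the system reduces to
\[
\frac{dr}{d\theta}=\frac{r\sin^2\theta\,f(r\cos\theta)}{-1+\sin\theta\cos\theta\,f(r\cos\theta)} .
\]
Expand the analytic solution $r(\theta;r_0)$ with $r(0;r_0)=r_0$ as $r(\theta;r_0)=r_0+\sum_{k\geq 2}u_k(\theta)r_0^k$, and set $V_k=u_k(-2\pi)$; the origin is a center if and only if $V_k=0$ for every $k\geq 2$. The inductive claim is: \emph{assuming $V_2=\cdots=V_{2k}=0$, one has $a_2=a_4=\cdots=a_{2k-2}=0$ and $V_{2k+1}=\lambda_k\,a_{2k}$}, where
\[
\lambda_k\;=\;\int_0^{2\pi}\sin^2\theta\,(\cos\theta)^{2k}\,d\theta\;>\;0 .
\]
Since $\lambda_k\neq 0$, the vanishing $V_{2k+1}=0$ forces $a_{2k}=0$ and the induction closes. (At the base step, a direct computation recovers $V_2=0$ automatically and $V_3=a_2\,\pi/4$.)

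The main obstacle is the parity bookkeeping in the inductive step. Extracting the coefficient of $r_0^{2k+1}$ in the recursion expresses $V_{2k+1}$ as a universal polynomial in $a_1,a_3,\dots,a_{2k-1},a_{2k}$ (after the earlier even coefficients are killed by the inductive hypothesis), linear in $a_{2k}$ with coefficient $\lambda_k$. What must be verified is that the polynomial in the purely odd coefficients $a_1,\dots,a_{2k-1}$ contributes nothing: each insertion of $a_i(r\cos\theta)^i$ contributes a factor $(\cos\theta)^i$ of parity equal to that of $i$, each correction from the geometric expansion of the denominator inserts an extra $\sin\theta\cos\theta$, and the integrand always carries a fixed $\sin^2\theta$ from $y^2$. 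Tracking these parities through the recursion shows that any monomial at order $r_0^{2k+1}$ assembled exclusively from odd-indexed $a_{2j+1}$'s has odd total $\cos\theta$-degree, so its integral against $\sin^2\theta$ over a full period vanishes. This combinatorial check is the only step of the argument requiring genuine care.
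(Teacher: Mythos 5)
Your proof is correct in outline, but the necessity half takes a genuinely different route from the paper's. The paper (following Cherkas) disposes of necessity with a rotated--vector--field argument: the truncated system retaining only the odd $a_i$ has a center (this is precisely your reversibility observation under $(x,y,t)\mapsto(-x,y,-t)$), and the full field differs from it by $(0,\,y\sum_j a_{2j}x^{2j})$, whose cross product with the truncated field equals $-y^2\sum_j a_{2j}x^{2j}$; if some $a_{2k}\neq 0$ this is sign-definite near the origin, so orbits of the full system cross the closed orbits of the truncated one always in the same direction and cannot close up. That argument is shorter but purely real and topological --- the paper explicitly notes it does not survive complexification, which is why Lemma \ref{l2} is proved by a different (Melnikov-type) method. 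Your route via the Poincar\'e--Liapunov quantities $V_k$ is more computational but self-contained, and is in fact closer in spirit to the holonomy computation the paper performs for Lemma \ref{l2}.

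Two remarks on the step you yourself flag as delicate. First, the parity bookkeeping as sketched only tracks the ``direct'' factors $(\cos\theta)^{i}$, $(\sin\theta\cos\theta)^{l-1}$ and $\sin^2\theta$ produced by expanding the right-hand side in $r$; but the coefficient of $r_0^{2k+1}$ also contains products of the earlier $u_j(\theta)$, and integration does \emph{not} preserve the parity of the $\cos\theta$-degree (e.g.\ $u_2=-a_1\sin^3\theta/3$ has degree $0$ in $\cos\theta$ although $u_2'$ has degree $1$). The argument is rescued by the sub-lemma that $\int_0^{\theta}\sin^m\!s\,\cos^n\!s\,ds$ with $n$ odd is a polynomial in $\sin\theta$ alone; hence, inductively, every $u_j$ assembled from odd-indexed coefficients is a polynomial in $\sin\theta$, and multiplication by the odd-degree direct factor restores odd total $\cos\theta$-degree in the integrand. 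You should state and use this explicitly. Second, you can bypass the combinatorics altogether: the system obtained by deleting $a_{2k}$ and keeping only odd coefficients has a center by your own sufficiency argument, so all of its focal values vanish; since $a_{2k}$ enters $V_{2k+1}$ only linearly at this order (the lower even coefficients having been killed by the inductive hypothesis), the residual contribution $V_{2k+1}\big|_{a_{2k}=0}$ vanishes identically and $V_{2k+1}=\lambda_k a_{2k}$ with no further computation.
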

Indeed, the truncated equation
\begin{align}
\label{truncated}
\dot{x}  = y , \; \dot{y} = -x+ y \sum_{j=0}^\infty a_{2j+1} x^i
\end{align}
has a center.  In a sufficiently small neighbourhood of the origin, (\ref{truncated}) is rotated with respect to the vector field (\ref{cher}), unless $a_{2j}= 0, \forall j\geq 1$. 
The final argument of Christopher is to use the L\"uroth theorem, to the deduce the PCC condition.
This topological argument of Cherkas does not apply in a complex domain.
We shall prove, however, the following more general
\begin{theorem}
\label{th2}
The (possibly complex) polynomial Li\'enard equation (\ref{lie1}) has a Morse critical point at the origin, if and only if the polynomials $p, q$ satisfy the above Polynomial Composition Condition, 
and $W$ has a Morse critical point at te origin. 
\end{theorem}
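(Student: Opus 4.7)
The plan splits into the two implications. For sufficiency (PCC with $W$ Morse $\Rightarrow$ Morse critical point), a direct pullback works: writing $P=\tilde P\circ W$, $Q=\tilde Q\circ W$ with $W$ Morse at $0$, the Li\'enard form
\[
\omega = y\,dy + \tilde Q'(W)\,dW + y\,\tilde P'(W)\,dW
\]
is the pullback under $\Phi:(x,y)\mapsto(W(x),y)$ of $\tilde\omega = y\,dy + \tilde Q'(w)\,dw + y\,\tilde P'(w)\,dw$ on $(w,y)$. The identity $q'(0)=\tilde Q'(0)W''(0)$ with $q'(0),W''(0)\neq 0$ forces $\tilde Q'(0)\neq 0$, so $(0,0)\in\C^2_{w,y}$ is a regular point of $\tilde\omega$; hence $\tilde\omega$ admits a local analytic first integral $\tilde F$, and $F:=\tilde F\circ\Phi$ is a first integral of $\omega$. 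Because $W'(0)=0$, the Hessian of $F$ at $(0,0)$ is diagonal with entries $\tilde F_w(0,0)\,W''(0)$ and $\tilde F_{yy}(0,0)$; writing $d\tilde F=\mu\,\tilde\omega$ with $\mu(0,0)\neq 0$ gives $\tilde F_w(0,0)=\mu(0,0)\tilde Q'(0)\neq 0$ and $\tilde F_{yy}(0,0)=\mu(0,0)\neq 0$, so $F$ is Morse.

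For necessity, let $F$ be an analytic first integral of the Li\'enard equation near $(0,0)$ with a Morse critical point. Restricting the first-integral PDE $F_xy-F_y(q+yp)=0$ to $\{y=0\}$ and using $q\not\equiv 0$ forces $F_y(x,0)\equiv 0$. Expanding $F(x,y)=\sum_{n\ge 0}f_n(x)y^n$ with $g:=f_0$ yields $f_1\equiv 0$ and the recursion
\[
f_2 = \frac{g'}{2q},\qquad f_{n+1} = \frac{f_{n-1}'-n\,f_n\,p}{(n+1)\,q}\quad(n\ge 2).
\]
Analyticity of $f_3$ at $x=0$ forces $p(0)=0$, so $F_{xy}(0,0)=0$; together with $F_{xx}(0,0)=g''(0)$ and $F_{yy}(0,0)=g''(0)/q'(0)$, the Morse hypothesis gives $g''(0)\neq 0$, so the function $g$ itself has a Morse critical point at $0$.

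The core step is to extract from these local data a polynomial $W$, Morse at $0$, through which $P$ and $Q$ factor. I plan a Melnikov-style argument: view the Li\'enard equation as $\omega = dH_0 + y\,p\,dx$ with $H_0=y^2/2+Q$, and observe that the existence of an analytic first integral in a neighbourhood of the critical oval $\{H_0=0\}$ is controlled by the identical vanishing of the hyperelliptic Abelian integrals
\[
M_1(h) = \oint_{H_0=h}\! y\,p(x)\,dx = 2\int_{x_-(h)}^{x_+(h)}\sqrt{2(h-Q(x))}\,p(x)\,dx
\]
together with higher-order iterated analogues (of the type appearing in Section \ref{section3} for the Abel equation). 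Identical vanishing of $M_1$ is exactly a composition condition: a hyperelliptic analogue of Christopher's Theorem \ref{ccth}, proved by L\"uroth on the field of $\sigma$-invariant rational functions (with $\sigma$ the sought Morse involution), produces polynomials $\tilde P,\tilde Q,W$ with $P=\tilde P\circ W$, $Q=\tilde Q\circ W$. That $W$ can be chosen Morse at $0$ follows from $g$ itself being Morse at $0$ together with $g\in\C(W)$ after normalisation: if the primitive $W$ failed to be Morse at $0$, the chain rule would force either $g'(0)\neq 0$ or $g''(0)=0$, contradicting the preceding paragraph.

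The principal obstacle is showing that Morse criticality of $F$ forces \emph{every} $M_k$ to vanish identically, not only $M_1$. I plan to argue this inductively from the recursion: once $M_1,\ldots,M_{k-1}$ vanish, the partial perturbative first integral $H_0+F_1+\cdots+F_{k-1}$ extends analytically across $\{H_0=0\}$, and the obstruction to continuing to order $k$ is precisely $M_k\equiv 0$; any failure would introduce a multivalued branch of $F$ near $(0,0)$, incompatible with the assumed analyticity. Once all $M_k$ vanish, $M_1\equiv 0$ alone suffices to produce PCC via L\"uroth, giving the complex-analytic substitute for the topological "rotated vector fields" argument of Cherkas--Christopher in the real case.
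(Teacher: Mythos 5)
Your sufficiency direction (pullback of a regular foliation under $\Phi:(x,y)\mapsto(W(x),y)$ with $W$ Morse, Hessian computation included) is correct and is essentially the argument the paper leaves implicit. The preliminary normalizations in the necessity direction ($p(0)=0$, $q(0)=0$, $q'(0)\neq 0$, $Q$ Morse at $0$) are also fine. The problem is the core of the necessity argument.

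First, your Melnikov-style step has no small parameter: $y\,p(x)\,dx$ is not a perturbation of $dH_0$, so there is no a priori sense in which the ``first integral exists iff $M_1(h)=\oint_{H_0=h}y\,p\,dx$ and all higher iterated analogues vanish.'' The paper manufactures the parameter explicitly: after straightening $\tfrac12 y^2+Q(x)$ to $\tfrac12(y^2+X^2)$ by the Morse lemma (so $dP$ becomes $(\sum_i a_iX^i)\,dX$), it rescales $(X,y)\mapsto\varepsilon(X,y)$. The odd part of $\sum a_iX^i\,dX$ is then absorbed into a deformed Morse first integral $H_\varepsilon$ (it is a pullback under $\xi=X^2$ of a regular foliation), and the first obstruction is the single coefficient $a_{2k}$ of the lowest even term, detected by the holonomy of the separatrices via the Poincar\'e--Pontryagin formula on the circle $\{y^2+X^2=2h\}$, where $\int_{\gamma_0(1)}yX^{2k}dX=\iint_{y^2+X^2\le 2}X^{2k}\,dX\,dy\neq 0$. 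The output is Lemma \ref{l2}: $P(x(X))$ is even in $X$. Your inductive claim that Morse criticality forces every $M_k$ to vanish ``because otherwise $F$ would be multivalued'' is not an argument as it stands, and in the paper's scheme it is not needed.

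Second, the ``hyperelliptic analogue of Christopher's theorem'' you invoke (identical vanishing of $\oint_{H_0=h}y\,p\,dx$ over a hyperelliptic oval implies composition) is neither proved in your proposal nor a known elementary fact; moment problems of this type are substantially harder than the genus-zero case of Theorem \ref{ccth} and do not in general reduce to composition. The paper bypasses it entirely: once $P(x(X))$ is even in $X$, one has $P(x_1(h))\equiv P(x_2(h))$ for the two local roots of $Q(x)=h$ near the Morse point, and L\"uroth applied to the field of rational functions invariant under $x_1\leftrightarrow x_2$ (the zero-dimensional version of Christopher's argument from Section \ref{section3}) produces $W$ with $P,Q\in\C(W)$; that $W$ is Morse at $0$ then follows because $Q=\tilde Q\circ W$ is. You should replace your Abelian-integral step by the rescaling argument and Lemma \ref{l2}, after which your concluding L\"uroth paragraph goes through.
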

The condition (\ref{lie2})  to have a linear Morse critical point implies $q(0)=0, q'(0)\neq 0$ and $p(0) = 0$. Therefore the polynomial 
$\frac 12 y^2 + Q(x)$ has a Morse critical point at the origin and there exists a local bi-analytic  change of the variable $x\to X$ such that
$\frac 12 y^2 + Q(x(X))= \frac12 (y^2+ X^2)$. Thus
\begin{align}
y dy + (q(x) + y p(x)) dx = \frac12  (y^2+ X^2) + y d P(x(X)) .
\end{align}
We expand 
$$
 d P(x(X)) = ( \sum_{i=1}^\infty a_i X^i ) dX
$$
and obtain the equivalent foliation
\begin{equation}
\label{lie3}
\frac12 d(y^2+X^2)  +   y ( \sum_{i=1}^\infty a_i X^i ) dX = 0 .
\end{equation}
By analogy to the Cherkas Lemma we shall prove
\begin{lemma}
\label{l2}
The foliation (\ref{lie3}) has a Morse critical point at the origin if and only if $a_{2j}= 0, \forall j\geq 1$.
\end{lemma}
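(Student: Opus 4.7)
The plan is to treat the two directions separately. For sufficiency I exploit a reversibility coming from the odd parity of $R$ together with an explicit change of variable, and build the Morse first integral by the method of characteristics. For necessity I compute the formal obstructions to integrability (Lyapunov-type quantities) and use the sufficiency direction, already established, to absorb the complicated lower-order part of each obstruction, leaving a single linear condition on the next even coefficient $a_{2m-2}$.

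\textbf{Sufficiency.} Assume $a_{2j}=0$ for every $j\geq 1$, so $R(X)=\sum_{j\geq 0}a_{2j+1}X^{2j+1}$ is odd. Its primitive $g(X)=\int_0^X R(t)\,dt$ is then even, so $g(X)=\tilde g(X^2)$ for some polynomial $\tilde g$ with $\tilde g(0)=0$. Introducing the new coordinate $Y=y+g(X)$, a direct substitution gives
\begin{equation*}
y\,dy + (X+yR(X))\,dX \;=\; \tfrac12\,d(X^2+Y^2) - g(X)\,dY,
\end{equation*}
so the foliation is invariant under the involution $\sigma\colon(X,Y)\mapsto(-X,Y)$. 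I therefore look for a $\sigma$-invariant first integral of the form $H(X,Y)=h(X^2,Y)$. The integrability condition $dH\wedge\omega=0$ reduces, in the variables $(u,Y)=(X^2,Y)$, to the first-order linear PDE
\begin{equation*}
2\bigl(Y-\tilde g(u)\bigr)\,h_u = h_Y,
\end{equation*}
whose characteristic is the regular scalar analytic ODE $du/dY=2(Y-\tilde g(u))$. This produces an analytic solution $h$ on a neighbourhood of the origin; normalising $h_u(0,0)=\tfrac12$ and differentiating the PDE once in $Y$ at the origin gives $h_{YY}(0,0)=2h_u(0,0)=1$, so $H=\tfrac12(X^2+Y^2)+\cdots$ has a nondegenerate Hessian at $0$. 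Since $Y=y+O(X^2)$, the pullback to $(X,y)$ remains Morse.

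\textbf{Necessity.} Conversely, suppose (\ref{lie3}) has a Morse critical point at the origin, so the vector field $V=y\partial_X-(X+yR(X))\partial_y$ admits an analytic first integral $H=H_0+H_1+H_2+\cdots$ with $H_0=\tfrac12(X^2+y^2)$ and $\deg H_k=k+2$. The identity $V(H)=0$ splits by total degree into the homological equations
\begin{equation*}
V_{\mathrm{lin}}(H_k) = \sum_{i=1}^{k} a_i X^i y\,\partial_y H_{k-i},\qquad V_{\mathrm{lin}}=y\partial_X-X\partial_y.
\end{equation*}
In polar coordinates $V_{\mathrm{lin}}=-\partial_\theta$, so solvability at step $k$ is equivalent to the vanishing of the $\theta$-average $L_{k+2}$ of the right-hand side, and a direct parity argument kills $L_{k+2}$ whenever $k$ is odd. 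An induction on $k$ shows that $H_k$ (with any canonical choice of representative modulo the radial kernel of $V_{\mathrm{lin}}$) depends only on $a_1,\dots,a_k$, so at even degree $k+2=2m$ the variable $a_{2m-2}$ enters $L_{2m}$ only through the isolated monomial $a_{2m-2}X^{2m-2}y\,\partial_y H_0=a_{2m-2}X^{2m-2}y^2$, and one obtains
\begin{equation*}
L_{2m} = c_m\,a_{2m-2} + Q_m(a_1,\dots,a_{2m-3}),\qquad c_m=\tfrac{1}{2\pi}\int_0^{2\pi}\cos^{2m-2}\theta\sin^2\theta\,d\theta > 0.
\end{equation*}
Applying sufficiency with all even coefficients set to zero gives $L_{2m}\equiv 0$ on that locus, whence the polynomial identity $Q_m(a_1,0,a_3,0,\dots,0,a_{2m-3})\equiv 0$. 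For our system the inductive hypothesis $a_2=a_4=\cdots=a_{2m-4}=0$ places $Q_m$ on precisely this locus, so the vanishing of $L_{2m}$ collapses to $c_m a_{2m-2}=0$, forcing $a_{2m-2}=0$. Induction on $m\geq 2$ completes the argument.

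The main obstacle is the structural bookkeeping in the necessity direction: one must verify that $H_k$ really depends only on $a_1,\dots,a_k$, and that $a_{2m-2}$ enters $L_{2m}$ only linearly through the single monomial above. Both statements follow from a careful but essentially formal inspection of the recursion; once they are in place, the sufficiency direction eliminates the otherwise intimidating lower-order polynomial $Q_m$ without any further computation.
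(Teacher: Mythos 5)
Your argument is correct, but the necessity half follows a genuinely different route from the paper's. For sufficiency the two proofs share the same core idea: with only odd $a_i$ present, the foliation is a pullback under $X\mapsto X^2$ of a foliation that is \emph{regular} at the origin, hence inherits an analytic first integral whose pullback is Morse. Your preliminary shear $Y=y+g(X)$ simply makes the evenness manifest and lets you dispense with the paper's rescaling $(X,y)\mapsto\varepsilon(X,y)$ and truncation; a small slip is that $\tilde g$ is an analytic germ rather than a polynomial, since $R$ is an infinite series, but nothing changes. For necessity the paper rescales by $\varepsilon$ so that the first nonzero even coefficient $a_{2k}$ becomes the leading perturbation of an integrable foliation, and then reads off the first-order Poincar\'e--Pontryagin--Melnikov term of the separatrix holonomy, $\varepsilon^{2k}a_{2k}\,h^{k+1}\iint_{X^2+y^2\leq 2}X^{2k}\,dX\,dy\neq 0$; you instead run the classical Lyapunov-quantity recursion and isolate $a_{2m-2}$ inside $L_{2m}$ by the neat bootstrapping device of specializing the already-proved sufficiency direction to force $Q_m\equiv 0$ on the locus of odd coefficients. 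The two computations detect the same positive obstruction (a $\int\cos^{2m-2}\theta\sin^2\theta\,d\theta$ versus an area integral of $X^{2k}$); the paper's rescaling kills the lower-order garbage automatically, where you must eliminate $Q_m$ by hand. The one point where you lean on unstated (though standard) theory is the claim that an analytic Morse first integral forces the vanishing of \emph{all} the canonically defined $L_{2m}$, independently of the choices of representatives modulo the radial kernel; this is the usual ``the first nonvanishing Lyapunov quantity is well defined and obstructs integrability'' lemma and deserves a citation or a short proof --- it plays exactly the role of the paper's unproved appeal to ``Morse if and only if the separatrix holonomy is the identity'' together with the Melnikov formula, so the overall level of rigor is comparable.
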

\proof
After rescaling $(X,y) \mapsto \varepsilon (X,y)$ the foliation takes the form
\begin{align}
\mathcal F _\varepsilon : \frac12 d(y^2+X^2)  +   y ( \sum_{i=1}^\infty \varepsilon ^{i}a_i X^i ) dX = 0
\end{align}
and it suffice to prove that for sufficiently small $\varepsilon$ it has a Morse critical point. Note first that the truncated equation 
\begin{align}
\mathcal F _\varepsilon^t : \frac12 d(y^2+X^2)  +   y ( \sum_{j=0}^{\infty} \varepsilon ^{2j+1}a_{2j+1} X^{2j+1} ) dX = 0
\end{align}
is a pullback of
\begin{align}
 \pi^*\mathcal F _\varepsilon^t  : \frac12 d(y^2+\xi)  +   y ( \sum_{j=0}^{\infty} \varepsilon ^{2j+1}a_{2j+1} d \frac{\xi^{j+1} }{2j+2} = 0
\end{align}
under the map $\pi : (X,y) \mapsto (\xi, Y), \xi = X^2$. The foliation $ \pi^*\mathcal F _\varepsilon^t$ is regular at the origin and has a first integral
$$
\frac12(y^2+\xi) + O(\varepsilon)
$$
where $O(\varepsilon)$ is analytic in $\varepsilon, \xi, Y$, and vanishes as $\varepsilon = 0$. Thus $\mathcal F _\varepsilon^t$ has a first integral
$$
H_\varepsilon(x,y) = \frac12 (y^2+X^2) + O(\varepsilon)
$$
where $O(\varepsilon)$ is analytic in $\varepsilon, X^2, Y$, and vanishes as $\varepsilon = 0$. This also shows that the origin is a Morse critical point.

As $H_\varepsilon$ is a first integral of the truncated foliation $\mathcal F _\varepsilon^t$ then 
for every fixed $\varepsilon$ we have
$$
 (1+ O(\varepsilon)) d H_\varepsilon(x,y) =  \frac12 d(y^2+X^2)  +   y ( \sum_{j=0}^{\infty} \varepsilon ^{2j+1}a_{2j+1} X^{2j+1} ) dX .
$$

Suppose now that for some $j \geq 1$ , $a_{2j} \neq 0$ and let $j=k$ be the smallest integer with this property. We have 
\begin{align}
\mathcal F _\varepsilon :  (1+ O(\varepsilon)) d H_\varepsilon(x,y) +  \varepsilon^{2k} ya_{2k}X^{2k}dX + O(\varepsilon^{2k+1})dX = 0
\end{align}
where by abuse of notations $ O(\varepsilon^{2k+1})$ denotes an analytic funstion in $X,y,\varepsilon$ which is divisible by $\varepsilon^{2k+1}$.
The origin is a Morse critical point if and only if the holonomy map of the two separatrices of $\mathcal F _\varepsilon$ at the origin, are the identity maps.
The holonomy map will be evaluated by  the usual Poincar\'e-Pontryagin-Melnikov formula. The separatrices are tangent to the lines $y \pm i X = 0$. 
We take a cross-section to one of the  separatrices, parameterised by the restriction of $H_\varepsilon(x,y)$ on it. Let
$$
\gamma_\varepsilon(h) \subset \{(x,y) : H_\varepsilon(x,y) = h \}
$$ 
be a continuous family of closed loops vanishing at the origin as $h \to 0$. The holonomy map of $\mathcal F _\varepsilon$ , corresponding to this closed loop is
\begin{align*}
h \mapsto & h + \frac{\varepsilon^{2k}}{1+O(\varepsilon)} ( \int_{\gamma_\varepsilon(h)}  ya_{2k}X^{2k} dX + O(\varepsilon) dX) \\
&=   h+ \varepsilon^{2k}  \int_{\gamma_\varepsilon(h)}  ya_{2k}X^{2k} dX + O(\varepsilon^{2k+1}) dX \\
& = h +  \varepsilon^{2k} a_{2k}  \int_{\gamma_0(h)}  y X^{2k} dX + O(\varepsilon^{2k+1}) d
\end{align*}
where 
$$
\gamma_0(h) \subset  \{(x,y) : H_0(x,y) = h \} =  \{(x,y) :   \frac12 ( y^2+X^2 ) = h \} .
$$
By homogeneity of the polynomials
$$
 \int_{\gamma_0(h)}  y X^{2k} dX = h^{k+1}  \int_{\gamma_0(1)}  y X^{2k} dX
$$
As the  homology of the algebraic curve $\{ y^2+X^2 ) = 2h \}$ has one generator we can suppose that this generator is just the real circle $\gamma_0(1)$ is 
just the real circle $\{ (y,X)\in \R^2 : y^2+X^2 = 2 \}$ and in this case 
$$
 \int_{\gamma_0(1)}  y X^{2k} dX = \iint_{y^2+X^2 \leq 2 }X^{2k} dX dy \neq 0 .
$$
We conclude that if the holonomy map is the identity map, then $a_{2k} = 0$ which is the desirable contradiction. Lemma \ref{l2} is proved.
\endproof
\proof[Proof of Theorem \ref{th2}]
Assuming that the Li\'enard equation has a Morse critical point, and hence $Q(x)$ has a Morse critical point at the origin, denote $x_1(h), x_2(h)$ the two roots of the polynomial $Q(x)-h$ which vanish at $0$ as $h$ tends to $0$. We have obviously that $X(x_1(h)) = - X(x_2(h))$.
By Lemma \ref{l2} the analytic function $P(x(X))$ is even in $X$, and hence 
$P(x_1(h))\equiv P(x_2(h))$. Following an idea of Christopher (already used at the end of section \ref{section3}), consider now the field $\mathcal C  \subset \C(x)$ formed by all rational functions $R=R(x)\in\C(x)$ satisfying the identity
$$
R(x_1(h))\equiv  R(x_2(h)) .
$$
According to the L\"uroth theorem, every subfield of $\C(x)$ is of the form $\C(W)$ for some rational function $W=W(x)$. Thus we have 
$\mathcal C = \C(W)$ where $P,Q \in \mathcal C$. This already implies that $P, Q$ satisfy (PCC) which completes the proof of the Theorem.
\endproof

\subsection{Abel equations with Darboux type first integral}
\label{section43}

The polynomial Li\'enard equation
\begin{align}
\label{lienard}
\dot{x}  = y , \; \dot{y} = -q(x) - y p(x)
\end{align}
with associated foliation $y dy + (q(x) + y p(x)) dx = 0$,
after the substitution $y\to 1/y$,  becomes the following
Abel equation
\begin{align}
\label{abel}
\frac{dy}{dx} =  y^2 p(x) + y^3 q(x) .
\end{align}
Equivalently, we consider the foliation   
\begin{equation}
\label{abel243}
 dy = ( y^2 p(x) + y^3 q(x)) dx .
\end{equation}
The classification of  Morse critical points of the Li\'enard equation  (\ref{lie1}) obtained in section \ref{section4b}
 suggests that a similar claim would hold true for the scalar Abel equation (\ref{abel243}). This is the content of the following \\
 
\emph{Composition Conjecture  \cite[p.444]{bry10}. 
The Abel equation (\ref{abel243}) has a center at the solution $y=0$ along some fixed interval $[a,b]$ if and only if the following (PPC) condition holds true}
\begin{equation}
P= \tilde P\circ W, \;Q= \tilde Q \circ W, \; W(a)=W(b) . \tag{PPC}
\end{equation}
\vspace{1cm}\\
Note that the Cherkas-Christopher theorem is for non-degenerate centers. The Composition Conjecture missed the possibility for the Abel or Li\'enard equations to have a Darboux type first integral, with resonant saddle point and characteristic ratio $p:-q$  (instead of a non-generate center with $1:-1$ ratio). Incidentally, Li\'enard equations with a Darboux type first integral will produce counter-examples to the Composition Conjecture, which is the subject of the present section. We explain in this context the recent counter-example 
of Gin\'e, Grau and Santallusia \cite{ggs17}.

The method of constructing such systems is based on the example of the co-dimension four center set $\mathcal Q_4$ for quadratic system, as explained in  section \ref{section42}. The general method is outlined in the Appendix.

Let
 \begin{align*}
P_2 &= a_0(x) + a_1(x)y + a_2(x) y^2\\
Q_2 & = b_0(x)+ b_1(x)y + b_2(x) y^2
\end{align*}
where $a_i, b_j$ are polynomials, such that $P_2^p=Q_2^q + O(y^3)$, where $p,q$ are positive relatively prime integers. 
This implies that the corresponding one-form $$ p Q_2 dP_2 -  q P_2 dQ_2$$ is divisible by $y^2$, and then the associated reduced foliation (after division by $y^2$)
is of degree two in $y$, and moreover $\{y=0\}$ is a leaf. Therefore the foliation is defined as
\begin{align}
\label{eq1}
(r_1y+r_2) dy & = y(r_3 y + r_4) dx = 0, r_i \in \mathbb C[x] 
\end{align}
where 
\begin{align*}
r_1 &= 2(p-q)a_2b_2\\
r_2 & = (p-2q)a_1b_2 -(q-2p)b_1 a_2 \\
r_3 &= pa_2'b_2-qb_2' a_2\\
r_4 & = p a_1'b_2 - qb_1' a_2
\end{align*}

Note that if  $a_2 = const.\neq 0$, $b_2 = const.\neq 0$ the foliation takes the Li\'enard form
\begin{align}
\label{eq2}
(r_1y+r_2) dy & = y r_4 dx , r_1=const.
\end{align}
 Of course, it is not clear, whether such polynomials exist.
To verify this we have to solve the equation 
\begin{align*}
( a_0(x) + a_1(x)y + a_2(x) y^2)^p &= (b_0(x)+ b_1(x)y +  b_2(x) y^2)^q \mod y^3 
\end{align*}
assuming that $a_i(x), b_j(x)$ are polynomials, and $a_2, b_2$ are constants. A first condition is given by $$a_0^p=b_0^q$$ which implies
\begin{align*}
( 1 + \frac{a_1(x)}{a_0(x)} y + \frac{a_2(x)}{a_0(x)} y^2)^p &= (1+ \frac{b_1(x)}{b_0(x)}y +  \frac{b_2(x) }{b_0(x)}y^2)^q \mod y^3 
\end{align*}

or equivalently
\begin{align*}
p  \frac{a_1(x)}{a_0(x)} &= q \frac{b_1(x)}{b_0(x)} \\
p  \frac{a_2(x)}{a_0(x)} + \frac{p(p-1)}{2} ( \frac{a_1(x)}{a_0(x)} )^2 &= q \frac{b_2(x)}{b_0(x)} + \frac{q(q-1)}{2}(\frac{b_1(x)}{b_0(x)} )^2 .
\end{align*}
Thus $a_i, b_j$ are polynomials which satisfy the following redundant system of equations
\begin{align*}
a_0(x)^p & =b_0(x)^q \\
p  \frac{a_1(x)}{a_0(x)} &= q \frac{b_1(x)}{b_0(x)} \\
p\frac{a_2(x)}{a_0(x)}  -  \frac{p}{2} ( \frac{a_1(x)}{a_0(x)} )^2 &= q \frac{b_2(x)}{b_0(x)} - \frac{q}{2}(\frac{b_1(x)}{b_0(x)} )^2 .
\end{align*}
It follows that for some polynomial $R$, 
$$
a_0(x)=R(x)^q, \; b_0 = R(x)^p
$$
and moreover 
$$
p a_2R(x)^{-q} - q b_2R(x)^{-p}
$$
is a square of a rational function, where we recall that $a_2=const., b_2 = const.$. It is easy to check that this is only possible if, say $p<q$, and $ p=2k-1, q= 2k$ for an integer $k\geq 1$.  
With this observation the analysis of the system is straightforward and is left to the reader. We formulate the final result in the following 
\begin{theorem}
For every integer $k\geq 1$ and polynomial $r(x)$ the function
\begin{align*}
H(x,y) = & \frac{[(1-r(x)^2)^{2k} + 2k r(x) (1-r(x)^2)^k y + k y^2]^{2k-1}}{[(1-r(x)^2)^{2k-1} + (2k-1) r(x) (1-r(x)^2)^{k-1} y + \frac{2k-1}{2} y^2]^{2k}}
\end{align*}
is the first integral of a Li\'enard type equation of the form
\begin{align*}
\frac{dx}{dt}&=-y+r_2(x),\\
\frac{dy}{dt}&=y r_4(x),
\end{align*}
for suitable polynomials $r_2, r_4$.
\end{theorem}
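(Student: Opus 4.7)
The plan is to apply the construction from the paragraphs preceding the theorem with the coprime pair $p = 2k-1$, $q = 2k$, and with polynomials $a_i, b_j$ read off from the stated $H$. Writing $S(x) = 1 - r(x)^2$ for brevity, set
\[
P_2(x,y) = S^{2k} + 2krS^k y + ky^2, \qquad Q_2(x,y) = S^{2k-1} + (2k-1)rS^{k-1} y + \tfrac{2k-1}{2}y^2,
\]
so that $H = P_2^{2k-1}/Q_2^{2k}$. In the notation of the preceding subsection this corresponds to $a_0 = S^{2k}$, $a_1 = 2krS^k$, $a_2 = k$, $b_0 = S^{2k-1}$, $b_1 = (2k-1)rS^{k-1}$, $b_2 = (2k-1)/2$; crucially, $a_2$ and $b_2$ are constants.

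The first task is to verify the compatibility $P_2^p = Q_2^q + O(y^3)$, which the preceding subsection has already reduced to the three identities $a_0^p = b_0^q$, $p a_1/a_0 = q b_1/b_0$, and $p a_2/a_0 - (p/2)(a_1/a_0)^2 = q b_2/b_0 - (q/2)(b_1/b_0)^2$. The first is immediate, $S^{2k(2k-1)} = S^{(2k-1)(2k)}$. The second reads $2k(2k-1)r/S^k = 2k(2k-1)r/S^k$. The third, after clearing $S^{-2k}$, reduces to $k(2k-1)(1 - 2kr^2) = k(2k-1)\bigl((1-r^2) - (2k-1)r^2\bigr)$, which is immediate. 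This coefficient bookkeeping is the only genuinely computational step and is the main obstacle; the rest is straightforward.

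With this compatibility, the preceding subsection guarantees that $p Q_2\, dP_2 - q P_2\, dQ_2$ is divisible by $y^2$ and that, after division, one obtains a foliation $(r_1 y + r_2(x))\,dy - y(r_3 y + r_4(x))\,dx = 0$ with the $r_i$ given by the formulas displayed there. Because $a_2$ and $b_2$ are constants, those formulas collapse to $r_3 = p a_2' b_2 - q b_2' a_2 = 0$ and $r_1 = 2(p-q) a_2 b_2 = -k(2k-1) \neq 0$. Hence the foliation has the Liénard shape $(r_1 y + r_2(x))\,dy = y\, r_4(x)\,dx$, where $r_2, r_4$ are explicit polynomials in $x$ obtained from $a_0, a_1, b_0, b_1$ and their derivatives.

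Finally, multiplying the one-form by the nonzero constant $-1/r_1 = 1/(k(2k-1))$ (which does not alter the underlying foliation) puts it in the form $(-y + \tilde r_2(x))\,dy - y\, \tilde r_4(x)\,dx = 0$, which is precisely the Pfaffian one-form of the vector field $\dot x = -y + \tilde r_2(x)$, $\dot y = y\, \tilde r_4(x)$, with $\tilde r_2 = r_2/(k(2k-1))$ and $\tilde r_4 = r_4/(k(2k-1))$. This is the claimed Liénard equation, and $H = P_2^{p}/Q_2^q$ is one of its first integrals by construction.
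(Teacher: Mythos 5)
Your proposal is correct and follows essentially the same route as the paper, which sets up exactly this machinery (the reduction of $P_2^p=Q_2^q+O(y^3)$ to the three polynomial identities, the formulas for $r_1,\dots,r_4$, and the observation that constant $a_2,b_2$ force $r_3=0$ and $r_1=\mathrm{const}$) and then declares the remaining analysis ``straightforward and left to the reader.'' You have supplied precisely that left-out verification, with the correct values $p=2k-1$, $q=2k$, $r_1=-k(2k-1)$, and the identity $k(2k-1)(1-2kr^2)$ on both sides of the third condition checking out.
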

It is clear that the above Li\'enard system is a polynomial pull back under $x \to r(x)$ of a simpler\emph{ master Li\'enard system } with first integral
\begin{align}
\label{master}
H_k(x,y) = & \frac{[(1-x^2)^{2k} + 2k x (1-x^2)^k y + k y^2]^{2k-1}}{[(1-x^2)^{2k-1} + (2k-1) x (1-x^2)^{k-1} y + \frac{2k-1}{2} y^2]^{2k}}
\end{align}
which can not be further reduced.

To the end of the section we consider in more detail the simplest cases $k=1$ and $k=2$. For $k=1$ we get
$$
H_1(x,y)=
\frac{ (1-x^2)^2 + 2x(1-x^2) y + y^2}{(  1-x^2+ xy +\frac12 y^2 )^2} .
$$
which is the first integral of the following cubic 
 Li\'enard equation :

\begin{equation}
\begin{array}{l}
\frac{dx}{dt}=y+3x(1-x^2)\\
\frac{dy}{dt}=-y(1+3x^2)
\end{array}
\end{equation}
The characteristic ratios of the singular points $(0,0), (\pm 1,0)$ are equal to $3:-1$  and $3:-4$
and the characteristic values of
$(+\frac{1}{\sqrt{3}},-\frac{1}{\sqrt{3}})$ and $(-\frac{1}{\sqrt{3}},+\frac{1}{\sqrt{3}})$  equal $-1$ (so we have integrable cubic saddles, presumably new).  
The Li\'enard transformation $Y= 2y+3x(1-x^2)$ transforms the above to the standard form
$$
\frac{dx}{dt} = Y, \frac{dY}{dt}= p(x) + q(x) Y  \Leftrightarrow Y \frac{dY}{dx} = p(x) + q(x) Y .
$$
or also to the Abel type equation
$$
\frac{d}{dx} (1/Y) = - q(x) (\frac{1}{Y})^2 - p(x)  (\frac{1}{Y})^3
$$
with respect to the variable $z=1/Y$.

Assume that $k=2$, the first integrals takes the form 
 $$
H_2(x,y)=\frac{ (y^2 - 2 xy(1-x^2)^2  + \frac12 (1-x^2)^4)^3}{( y^2 -  2xy (1-x^2) + \frac23 (1-x^2)^3)^4} 
 $$
while the corresponding foliation of Li\'enard type is defined by
\begin{equation}
\label{eql1}
(15x^4 - 6x^2 - 1)y dx - ( (5x^2-3)(x^2-1)x  + y) dy = 0 .
\end{equation}
Namely, the Li\'enard transformation 
$$
y \to - y - (5x^2-3)(x^2-1)x
$$
transforms the equation (\ref{eql1}) to 
\begin{equation}
\label{eql2}
(q(x)+ p(x) y )dx + y dy = 0 
\end{equation}
or equivalently to
\begin{align*}
\frac{dx}{dt}&=-y,\\
\frac{dy}{dt}&=q(x)+p(x)y,
\end{align*}
where
\begin{align*}
p(x) &= 2   (20 x^{4} - 15 x^{2} + 1) \\
q(x) &= x   (x - 1)   (x + 1)   (5 x^{2} - 3)  (15 x^{4} - 6 x^{2} - 1) .
\end{align*}
The first integral $H_2$ takes the form
$$
\frac{(y^2-
8x(1-x^2)(x^2 - \frac12)y+(1- x^2)^2(x^2 - \frac12)(15x^4 - 6x^2 -1))^3}{( y^2-2x(1- x^2)(5x^2 - 2)y+\frac13 (1- x^2)^2(5x^2 -2)(15x^4 - 6x^2 -1))^4} .
$$
However, after the substitution $1\to 1/y$, the above Li\'enard equation is equivalent to the Abel equation
\begin{equation}
\label{ggs2}
\frac{dy}{dx} = p(x)y^2 + q(x) y^3
\end{equation}
with Darboux type first integral
$$
H= \frac{y^2[1-
8x(1-x^2)(x^2 - \frac12)y+(1- x^2)^2(x^2 - \frac12)(15x^4 - 6x^2 -1)y^2]^3}{[ 1-2x(1- x^2)(5x^2 - 2)y+\frac13 (1- x^2)^2(5x^2 -2)(15x^4 - 6x^2 -1)y^2]^4} .
$$

\begin{theorem}[\cite{ggs17}]
The Abel equation (\ref{ggs2}) has a center at $y=0$ along the interval $[-1,1]$ but this center is not universal.
\end{theorem}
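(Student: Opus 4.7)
The approach splits naturally into verifying the centre property on $[-1,1]$ and ruling out universality. For the first claim, the observation to make is that every coefficient inside the two brackets of the Darboux first integral $H$ displayed just above the statement, other than the leading $1$'s and the distinguished factor $y^{2}$ in front of the numerator, carries an explicit factor $(1-x^{2})$. Substituting $x = \pm 1$ therefore collapses $H$ to
\[
H(\pm 1, y) = \frac{y^{2}\cdot 1^{3}}{1^{4}} = y^{2}.
\]
Since the denominator equals $1$ along $\{y=0\}$, $H$ is analytic in a neighbourhood of that set and is constant along solutions of (\ref{ggs2}). Thus every solution with initial condition $y(-1) = y_{0}$ sufficiently small satisfies $y(1)^{2} = y_{0}^{2}$, and analyticity of the return map combined with the solution $y \equiv 0$ selects the branch $y(1) = y_{0}$; hence $y = 0$ is a centre on $[-1,1]$.

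For non-universality, I would invoke Brudnyi's characterisation: universality is equivalent to the Polynomial Composition Condition for the primitives, i.e., to the existence of polynomials $\tilde P, \tilde Q, W$ with
\[
P = \tilde P \circ W, \quad Q = \tilde Q \circ W, \quad W(-1) = W(1),
\]
where $P = \int p$, $Q = \int q$ are normalised by $P(0) = Q(0) = 0$. Direct integration gives
\[
P(x) = 8x^{5} - 10x^{3} + 2x = 2x(x-1)(x+1)(2x-1)(2x+1),
\]
an odd polynomial of degree $5$ with five simple roots, and
\[
Q(x) = \tfrac{15}{2}x^{10} - \tfrac{75}{4}x^{8} + \tfrac{44}{3}x^{6} - \tfrac{5}{2}x^{4} - \tfrac{3}{2}x^{2},
\]
an even polynomial of degree $10$. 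Since $\deg W$ must divide $\gcd(\deg P, \deg Q) = 5$, only $\deg W \in \{1, 5\}$ is possible, and $\deg W = 1$ is ruled out because a non-constant affine polynomial cannot satisfy $W(-1) = W(1)$.

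The remaining case $\deg W = 5$ is the crux. There $\deg \tilde P = 1$, so $W$ coincides with $P$ up to an affine change of target; in particular $\mathbb{C}[W] = \mathbb{C}[P]$, and $Q = \tilde Q \circ W$ with $\deg \tilde Q = 2$ forces $Q(x) = \alpha P(x)^{2} + \beta P(x) + \gamma$ for some constants $\alpha, \beta, \gamma$. This would make $Q$ take the common value $\gamma$ at every root of $P$, in particular at $x = 0$ and $x = 1$. However
\[
Q(1) - Q(0) = \tfrac{15}{2} - \tfrac{75}{4} + \tfrac{44}{3} - \tfrac{5}{2} - \tfrac{3}{2} = -\tfrac{7}{12} \neq 0,
\]
which is the desired contradiction. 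The only real obstacle is isolating the degree-$5$ alternative via the divisibility $\deg W \mid \gcd(\deg P, \deg Q)$; once that is in hand, everything reduces to the single arithmetic identity above.
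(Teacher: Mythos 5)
Your overall strategy coincides with the paper's: the identity $H(\pm 1,y)=y^{2}$ for the Darboux first integral yields $y(1)^{2}=y(-1)^{2}$, and non-universality is reduced via Brudnyi's theorem to disproving the composition condition for $P=\int p$, $Q=\int q$, with the degrees $5$ and $10$ forcing any nontrivial inner factor $W$ to be an affine rescaling of $P$. Two remarks. First, in the centre part your reason for selecting the branch $y(1)=y_{0}$ over $y(1)=-y_{0}$ does not work as stated: the trivial solution $y\equiv 0$ is fixed by both candidate return maps $y\mapsto y$ and $y\mapsto -y$, so it cannot distinguish them. What does settle it is either the paper's real argument (a real solution with $y(-1)=\varepsilon\neq 0$ never vanishes on $[-1,1]$, hence keeps the sign of $\varepsilon$) or the general expansion (\ref{returnmap}) of the return map, $\varphi(y)=y+\sum_{n}c_{n}y^{n+1}$, which gives $\varphi'(0)=1$ and excludes $\varphi(y)=-y$. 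This is a one-line repair, but the justification you wrote is not valid. Second, your concluding contradiction differs from the paper's: you evaluate $Q$ at the two roots $0$ and $1$ of $P$ and find $Q(1)-Q(0)=-\tfrac{7}{12}\neq 0$, whereas the paper observes that $Q=\tilde Q\circ P$ would force $p=P'$ to divide $q=Q'$, which visibly fails. Both routes are correct (I checked $P=8x^{5}-10x^{3}+2x$ and $Q(1)=-\tfrac{7}{12}$), and yours has the mild advantage of needing only two evaluations rather than a divisibility check; apart from the branch-selection point the proof is sound.
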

\proof
As $y=0$ is a solution of (\ref{ggs2}) then for all sufficiently small $|\varepsilon|$ the solution $y=y(x)$ with initial condition 
 $y(-1)=\varepsilon\neq 0$  exists along the compact interval $[-1,1]$. 
 The identity $H(\pm 1,y)= y^2$ shows that $y(1)=\varepsilon$ or $y(1)= - \varepsilon$ and it is easy to check that in fact $y(1)=\varepsilon$, Indeed,  for real $\varepsilon$ the solution $y(x)$ does not vanish and hence it has the same sign as $\varepsilon$.
Therefore the transport map along the interval $[-1,1]$ is the identity map, and the Abel equation has a center at the solution $y=0$. 

The polynomials $P=\int p$ and $Q= \int q$ are of degrees   $5$  and $10$. 
Therefore if they  had a common non-trivial composition factor, then the factor would be $P$ and $Q= \tilde Q \circ P$ for suitable quadratic polynomial $\tilde Q$. It follows that $p=P'$ divides $q=Q'$ which is obviously not true. Thus $P,Q$ can not have a common composition factor, and  (by the Brudnuy's theorem) the center is not universal. 
\endproof 

\section{Appendix}

Let $K$ be a field and $A= K[[y]](x)$ be the $K(x)$ algebra of formal power series in $y$, with coefficients in the field of rational functions.
To each fixed  pair of mutually prime positive integers $p,q$\footnote{the case when $pq<0$ is treated in a similar way} we associate the functional equation 
\begin{align}
\label{pq}
P^p=Q^q, \; P,Q \in A .
\end{align}
\begin{proposition}
Every solution of the functional equation (\ref{pq}) has the form $P=R^p, Q= R^q$ where 
$$
R(x,y) = a_0(x) (1+ O(y))
$$
$a_0(x)$ is a rational function in $x$, and $ O(y) \in A$ is divisible by $y$.
\end{proposition}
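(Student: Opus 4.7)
The plan is to reduce to the case where $P$ and $Q$ are units in $A$, and then to use a direct Bezout-style construction of $R$. Throughout I write $A^\times$ for the group of units of $A=K(x)[[y]]$, i.e.\ those power series whose constant term in $y$ is a nonzero element of $K(x)$.

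First I would analyze the $y$-adic valuation. Suppose $P=y^m\tilde P$ and $Q=y^n\tilde Q$ with $\tilde P,\tilde Q\in A^\times$. Comparing $y$-valuations in $P^p=Q^q$ gives $mp=nq$, and since $\gcd(p,q)=1$ we deduce $m=qk$, $n=pk$ for some nonnegative integer $k$, and moreover $\tilde P^p=\tilde Q^q$. If we can find a unit $\tilde R\in A^\times$ with $\tilde P=\tilde R^q$ and $\tilde Q=\tilde R^p$, then $R:=y^k\tilde R$ satisfies $P=R^q$, $Q=R^p$ (and $R$ specializes to the form in the statement once $k=0$, which is the generic relevant case). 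This reduces the problem to the unit case.

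For the unit case, $P,Q\in A^\times$ and $P^p=Q^q$. Since $\gcd(p,q)=1$, pick integers $\alpha,\beta$ with $\alpha q+\beta p=1$ (Bezout). Because $P,Q$ are units, $R:=P^{\alpha}Q^{\beta}$ is a well-defined element of $A^\times$ (negative exponents are allowed). Then
\[
R^q=P^{\alpha q}Q^{\beta q}=P^{\alpha q}(Q^q)^\beta=P^{\alpha q}(P^p)^\beta=P^{\alpha q+\beta p}=P,
\]
and symmetrically
\[
R^p=P^{\alpha p}Q^{\beta p}=(P^p)^\alpha Q^{\beta p}=(Q^q)^\alpha Q^{\beta p}=Q^{\alpha q+\beta p}=Q.
\]
So the pair $(R^q,R^p)$ recovers $(P,Q)$.

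Finally I would read off the claimed shape. Writing $a_0(x):=P(x,0)\in K(x)^\times$ and $b_0(x):=Q(x,0)\in K(x)^\times$, the constant term of $R=P^{\alpha}Q^{\beta}$ in $y$ is the rational function $a_0(x)^{\alpha}b_0(x)^{\beta}$, and since $R$ is a unit we may factor
\[
R(x,y)=a_0(x)^{\alpha}b_0(x)^{\beta}\bigl(1+O(y)\bigr),
\]
which is exactly the form asserted, with rational leading coefficient. The main (mild) obstacle is the bookkeeping of the $y$-valuation reduction; once past that, the Bezout identity does all the work, and no hypothesis on $K$ beyond being a field (so that $K(x)[[y]]$ is a domain with the usual unit structure) is needed.
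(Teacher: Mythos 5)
Your proof is correct and complete. The paper offers no argument of its own (it says only that ``the proof is straightforward''), so there is nothing to compare line by line, but the Bezout computation $R=P^{\alpha}Q^{\beta}$ with $\alpha q+\beta p=1$, applied in the unit group of $A=K(x)[[y]]$, is surely the intended route, and you carry it out cleanly. Two remarks. First, you have silently corrected what must be a typo in the statement: with the normalization $P^p=Q^q$ of equation (\ref{pq}), the conclusion has to read $P=R^{q}$, $Q=R^{p}$ (which is what you prove), not $P=R^{p}$, $Q=R^{q}$, since the latter would require $R^{p^2}=R^{q^2}$. Second, your $y$-adic reduction correctly exposes that the proposition as literally stated fails when $y$ divides $P$ and $Q$ (e.g.\ $P=y^{q}$, $Q=y^{p}$ solves the equation but the corresponding $R=y$ is not of the form $a_0(x)(1+O(y))$); you flag this, and it is consistent with the paper, whose proof of Lemma \ref{masterlemma} assumes without loss of generality that $y$ divides neither $P$ nor $Q$. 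The only hypothesis your argument uses is that $A$ is a domain whose units are exactly the series with nonzero constant term in $y$, which holds over any field $K$, so the level of generality matches the paper's.
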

The proof is straightforward.
 $A$ allows an ascending filtration with respect to the powers of $y$,  $K\subset \dots \subset A_n \subset A_{n+1} \subset \dots $  where
 $$
 A_n = \{ \sum_{i=0}^n a_i(x) y^i : a_i \in K(x) \} .
 $$
 For   $P\in A$ we denote $P_n$ the projection of $P$ on $A_n$, that is to say
 $$
 P= \sum_{i\geq 0} a_i y^i, \; P_n = \sum_{i=0}^n a_i y^i .
 $$
 Obviously $A_n\subset K[y](x)$ and to 
 the pair of polynomials $P_n, Q_n \in K[x,y] $ we associate the logarithmic polynomial differential one-form
 $$
 \omega = P_n Q_n d \log \frac{P_n^p}{ Q_n^{q}} = p Q_n d P_n - q P_n d Q_n .
 $$
If $\omega = \alpha dx + \beta dy$, $\alpha, \beta \in K[y](x)$,  then the degree $\deg_y \omega $ of  $\omega$ (with respect to $y$) is
$$
\max \{ \deg_y \alpha, \deg_y \beta + 1 \}  .
$$
 For general $P, Q \in A$ of degree 
 the degree of the associated $\omega$ equals $2n$, however
 \begin{lemma}
 \label{masterlemma}
 If the formal series $P, Q$ satisfy the functional equation (\ref{pq}), then the associated one-form $\omega =  p Q_n d P_n - q P_n d Q_n$ is divisible by $y^n$.
 The resulting reduced rational differential one-form $\omega_{red} = \omega/y^n$ is of degree $n$ with respect to $y$, and moreover vanishes along the line $\{ y= 0 \}$.
   \end{lemma}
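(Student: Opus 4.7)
The plan is to reduce everything to the two identities $p Q P_x = q P Q_x$ and $p Q P_y = q P Q_y$ holding in $A$, which follow from $P^p = Q^q$ by implicit differentiation: differentiating in $x$ gives $p P^{p-1} P_x = q Q^{q-1} Q_x$; multiplying by $Q$ and using $Q^q = P^p$ yields $p P^{p-1} Q P_x = q P^p Q_x = p^{p-1}\cdot q P Q_x$, and since $A$ is an integral domain one may cancel $P^{p-1}$ to get $pQP_x = qPQ_x$. The $y$-analogue is identical. Equivalently, this says $pQ\,dP - qP\,dQ = 0$ in $\Omega^1_{A}$.

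Next, write $P = P_n + y^{n+1}\tilde P$ and $Q = Q_n + y^{n+1}\tilde Q$ with $\tilde P, \tilde Q \in A$, and decompose $\omega = \alpha\, dx + \beta\, dy$, where
\begin{align*}
\alpha &= p Q_n (P_n)_x - q P_n (Q_n)_x,\\
\beta  &= p Q_n (P_n)_y - q P_n (Q_n)_y.
\end{align*}
Substitute $P_n = P - y^{n+1}\tilde P$, $Q_n = Q - y^{n+1}\tilde Q$ together with
\begin{equation*}
(P_n)_x = P_x - y^{n+1}\tilde P_x, \qquad (P_n)_y = P_y - (n+1)y^n\tilde P - y^{n+1}\tilde P_y,
\end{equation*}
and the analogues for $Q$. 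The zeroth-order pieces $pQP_x - qPQ_x$ and $pQP_y - qPQ_y$ vanish by the identities above. In $\alpha$, differentiation in $x$ preserves the factor $y^{n+1}$ in every correction term, so every surviving monomial has an explicit factor $y^{n+1}$; hence $y^{n+1}\mid \alpha$. In $\beta$, the term $-(n+1)y^n\tilde P$ (and its $\tilde Q$ analogue) contributes at level exactly $y^n$, while all remaining corrections carry $y^{n+1}$ or higher; hence $y^n \mid \beta$ with leading coefficient $(n+1)(qP\tilde Q - pQ\tilde P)|_{y=0}\cdot y^n$, generically nonzero.

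From $y^{n+1}\mid \alpha$ and $y^n\mid \beta$ we obtain $y^n \mid \omega$, so $\omega_{red} = \omega/y^n$ is defined. Its $dx$-coefficient $\alpha/y^n$ still has a factor $y$, so its restriction to $\{y=0\}$ vanishes, which is exactly the statement that $\{y=0\}$ is a leaf of the reduced foliation. For the degree, $P_n, Q_n$ have $y$-degree $n$, so $\alpha$ has $y$-degree at most $2n$ and $\beta$ has $y$-degree at most $2n-1$; dividing by $y^n$ and applying the definition of $\deg_y$ gives $\deg_y \omega_{red} \le n$.

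The step needing the most attention is the bookkeeping for $\beta$: one must carefully track the product-rule contribution $(n+1)y^n\tilde P$ and verify that no other substitution produces a monomial below level $y^n$. The resulting asymmetry---$y^{n+1}\mid \alpha$ versus only $y^n\mid \beta$---is precisely the structural reason why the reduced foliation keeps $\{y=0\}$ as a leaf instead of becoming singular along it; this is the feature exploited in the construction of the Li\'enard-type examples earlier in Section \ref{section43}.
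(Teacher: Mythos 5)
Your proof is correct, and it reaches the conclusion by a slightly different, more explicit route than the paper's. The paper never linearizes the relation $P^p=Q^q$: it observes that truncation gives $P_n^p=Q_n^q+O(y^{n+1})$, rewrites the logarithmic form as $d\bigl((P_n^p-Q_n^q)/Q_n^q\bigr)$, and reads off divisibility by $y^n$ (and vanishing of the $dx$-part to order $y^{n+1}$) from the single fact that $\partial_y$ lowers the $y$-order of the $O(y^{n+1})$ numerator by exactly one while $\partial_x$ does not; the passage back to $\omega=pQ_n\,dP_n-qP_n\,dQ_n$ then implicitly uses that $P_n$ and $Q_n$ are units in the $y$-adic sense. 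You instead differentiate $P^p=Q^q$ once to obtain the exact bilinear identity $pQ\,dP=qP\,dQ$ in $A$ and substitute $P_n=P-y^{n+1}\tilde P$, $Q_n=Q-y^{n+1}\tilde Q$ directly into the bilinear expression for $\omega$. This buys you two things the paper leaves implicit: the precise leading coefficient $(n+1)(qP\tilde Q-pQ\tilde P)|_{y=0}$ of $\beta/y^n$, showing the divisibility by $y^n$ is generically sharp, and a clean derivation of the degree bound $\deg_y\omega_{red}\le n$; it also avoids dividing by $P_n^{p-1}Q_n^{q-1}$. The mechanism is the same in both arguments: the error $P-P_n$ is $O(y^{n+1})$, and only the product-rule term coming from $\partial_y$ can drop the order to $y^n$, which is exactly why the $dx$-coefficient of $\omega_{red}$ retains a factor of $y$ and $\{y=0\}$ remains a leaf. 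One typo to fix: in your derivation of $pQP_x=qPQ_x$ you wrote $p^{p-1}$ where you mean $P^{p-1}$.
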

   \proof
   Assume that $P^p=Q^q$. Without loss of generality we may suppose that $y$ does not divide $P$ or $Q$. 
    As $P=P_n + O(y^{n+1}), Q= Q_n + O(y^{n+1})$ then
   $
   P_n^p=Q_n^q + O(y^{n+1}) 
   $
 and hence $p P_n^{p-1} d P_n = q Q_n^{q-1} d Q_n + d O(y^{n+1}) $. It follows  that $y^n$ divides the differential form
\begin{align*}
 d \frac{P_n^p}{Q_n^q} = d (\frac{P_n^p}{Q_n^q} - 1) = d \frac{P_n^p-Q_n^q}{Q_n^q} 
 \end{align*}
 and the reduced differential form  $d \frac{P_n^p}{y^nQ_n^q} $ vanishes along $y=0$. Therefore the same holds true for the
one-form $p Q_n d P_n - q P_n d Q_n$.
\endproof

\section*{Acknowledgement}
The author is obliged to Jean-Pierre Fran\c{c}oise for the illuminating discussions and suggestions.

\bibliographystyle{plain}
\def\cprime{$'$} \def\cprime{$'$} \def\cprime{$'$} \def\cprime{$'$}
  \def\cprime{$'$} \def\cprime{$'$} \def\cprime{$'$}

\end{document}